\def\C{{\mathscr{C}}}
\def\CC{{\mathbb{C}}}
\def\K{{\mathscr{K}}}
\def\PP{{\mathbb{P}}}
\def\U{{\mathscr{U}}}
\newtheorem{them}{Theorem}[section]
\newtheorem{pro}[them]{Proposition}
\newtheorem{exa}[them]{Example}
\newtheorem{lem}[them]{Lemma}
\newtheorem{rem}[them]{Remark}
\newtheorem{cor}[them]{Corollary}
\newtheorem{defi}[them]{Definition}
\newtheorem{NA}[them]{Notation-Assumptions}
\title[A bound of lengths of chains of minimal rational curves]{A bound of lengths of chains of minimal rational curves on Fano manifolds of Picard number $1$}
\author{Kiwamu Watanabe}
\date{January 2010, Revised: April 25 2011.}
\address{Department of Mathematical Sciences  School of Science and Engineering Waseda University, 
4-1 Ohkubo 3-chome 
Shinjuku-ku 
Tokyo 169-8555 
Japan}
\address{Current address: Graduate School of Mathematical Sciences, University of Tokyo, 3-8-1 Komaba Meguro-ku Tokyo 153-8914, Japan.}
\email{watanabe@ms.u-tokyo.ac.jp}
\subjclass[2000]{14E30, 14J45, 14N99.}
\keywords{Fano manifold, chain of rational curves, minimal rational component, variety of minimal rational tangents.}
\begin{document}

\maketitle


\begin{abstract}
In this paper, we investigate the minimal length of chains of minimal rational curves needed to join two general points on a Fano manifold of Picard number $1$ under mild assumptions. In particular, we give a sharp bound of the length by a fundamental argument. As an application, we compute the length for Fano manifolds of dimension $\leq 7$.  
\end{abstract}

\section{Introduction}

We say that a complex projective manifold $X$ is {\it Fano} if its anticanonical divisor is ample.  Rational curves on Fano manifolds have been studied by several authors. For instance, J. Koll\'{a}r, Y. Miyaoka and S. Mori proved the following: 
\begin{them}[\cite{KMM1}, \cite{Na}]\label{KMM} For a Fano $n$-fold of Picard number $\rho=1$, two general points can be connected by a smooth rational curve whose anticanonical degree is at most $n(n+1)$. 
\end{them}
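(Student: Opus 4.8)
The plan is to follow Koll\'ar--Miyaoka--Mori: join two general points by a short chain of low-degree rational curves and then smooth that chain into a single smooth rational curve of the same anticanonical degree. Write $n=\dim X$ and fix a general point $x\in X$. Since $X$ is Fano it is uniruled, so let $V$ be a family of rational curves through a general point whose anticanonical degree $d:=-K_X\cdot C$ is minimal; note $d\geq 2$, since $T_{\PP^1}=\mathcal{O}(2)\hookrightarrow f^{*}T_X$ for a free parametrization $f\colon\PP^1\to X$. To bound $d$, observe that for general $x$ a member $f$ is free, so $f^{*}T_X$ is globally generated, the deformations of $f$ with $f(0)=x$ have dimension $\geq d$, and after quotienting by the $2$-dimensional stabilizer of $0\in\PP^1$ the members of $V$ through $x$ move in a family of dimension $\geq d-2$, sweeping a locus $\mathrm{Locus}(V_x)$ of dimension $\ell\leq n$. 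If $d\geq n+2$, then for a general $y\in\mathrm{Locus}(V_x)$ the members through both $x$ and $y$ form a family of dimension $\geq (d-1)-\ell\geq d-1-n\geq 1$; Mori's bend-and-break then degenerates such a member into a connected $1$-cycle containing a rational curve through $x$ of anticanonical degree $<d$, contradicting minimality. Hence $d\leq n+1$.

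Next I would track the increasing loci
\[
Z_0=\{x\}\subseteq Z_1\subseteq Z_2\subseteq\cdots,\qquad Z_{k+1}=\mathrm{Locus}(V_{Z_k}),
\]
where each $Z_k$, taken to be the irreducible component containing $x$, is the set of points joined to $x$ by a chain of at most $k$ members of $V$; a single member gives $\dim Z_1\geq d-1\geq 1$. The key point, and the only place the hypothesis $\rho(X)=1$ enters, is that $Z_k$ grows strictly until it equals $X$: were $Z_{k+1}=Z_k$, then $Z:=Z_k$ would be a proper $V$-invariant subvariety, the $V$-chain-equivalence relation would admit a nontrivial rational quotient $\pi\colon X\dashrightarrow Q$ with $\dim Q>0$ contracting every member of $V$, and pulling back an ample class would yield a class $\pi^{*}A$ with $\pi^{*}A\cdot C=0<-K_X\cdot C$, so that $\pi^{*}A$ and $-K_X$ would be independent in $N^1(X)$, contradicting $\rho(X)=1$. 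Therefore $\dim Z_{k+1}\geq\dim Z_k+1$ while $Z_k\neq X$, whence $\dim Z_k\geq k$ and $Z_n=X$. In particular a general point $y$ is joined to $x$ by a chain of at most $n$ members of $V$, of total anticanonical degree $\leq nd\leq n(n+1)$.

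Finally I would smooth this chain. Since $x$ and $y$ are general, the chain can be arranged to be a connected chain of free rational curves meeting transversally, and vanishing of the relevant obstruction space $H^1$ for such a configuration shows it deforms to a free immersed rational curve of the same total anticanonical degree through $x$ and $y$; a further general deformation renders it smooth. Its anticanonical degree is $\leq n(n+1)$, which proves the statement.

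I expect the two delicate points to be the following. First, in the degree bound one must verify via bend-and-break that the limit cycle genuinely contains a rational curve of \emph{strictly} smaller degree through the fixed general point, rather than merely a multiple cover of a degree-$d$ curve. Second, the strict growth of $Z_k$ is exactly where $\rho(X)=1$ is indispensable, excluding the possibility that the chains remain trapped in a fibre of a nontrivial quotient; this is the heart of the argument. By comparison the smoothing is routine once freeness has been arranged.
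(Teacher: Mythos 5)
Your strategy---minimal free family, bend-and-break bound $d\le n+1$, strict chain growth forced by $\rho=1$, then smoothing---is exactly the strategy of the cited Koll\'ar--Miyaoka--Mori proof; note the paper itself does not reprove Theorem~\ref{KMM}, it only cites \cite{KMM1}, \cite{Na}, though it reproduces the chain-growth step in the proof of Lemma~\ref{dim}~(ii). Your bend-and-break dimension count is correct, and your treatment of strict growth is sound but differs from the source: where \cite{KMM1} (and the paper) sweep out an ample divisor $H$ by the loci $V_x^m$ and derive a contradiction from $C\cap H=\emptyset$, you invoke the rational quotient of the chain-equivalence relation and pull back an ample class. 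That is legitimate but uses heavier machinery (existence of the Campana/KMM quotient) than the elementary divisor argument it replaces.

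The genuine gap is in the smoothing step, which you dismiss as routine. You claim a chain of free curves through $x$ and $y$ can be smoothed \emph{fixing both points} because ``the relevant obstruction space $H^1$'' vanishes; it does not. For a chain $C=C_1\cup\dots\cup C_l$ of minimal free curves one has $f^*T_X|_{C_i}\cong \mathcal{O}(2)\oplus\mathcal{O}(1)^{p}\oplus\mathcal{O}^{\,n-1-p}$, and the normalization sequence shows that already for $l=2$ the obstruction space $H^1\bigl(C,f^*T_X\otimes I_{\{x,y\}}\bigr)$ is the cokernel of
\begin{equation*}
H^0\bigl(f^*T_X|_{C_1}(-x)\bigr)\oplus H^0\bigl(f^*T_X|_{C_2}(-y)\bigr)\longrightarrow (f^*T_X)_z\cong \CC^n,
\end{equation*}
whose source has dimension $2(p+2)$; it is nonzero whenever $n>2p+4$ (e.g.\ lines on a general octic hypersurface in $\PP^{11}$, where $n=10$, $p=2$). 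Equivalently, the two-point evaluation map of a free-but-not-very-free family is not submersive, because the trivial summands $\mathcal{O}^{\,n-1-p}$ only give sections constant along the curve; this is precisely why fixing two points cannot be done by a naive unobstructedness argument. What the cited proof actually does, and what you need, is to smooth the chain fixing only the \emph{one} point $x$: with a single marked point, $H^1\bigl(C,f^*T_X\otimes I_{\{x\}}\bigr)=0$ does follow from freeness, since global generation of $f^*T_X|_{C_i}$ on the unmarked components makes the restriction maps to the nodes surjective. The resulting irreducible free curves of the same degree through $x$ have second endpoints sweeping out a dense subset of $X$; since the set of points joined to $x$ by an irreducible rational curve of anticanonical degree $\le n(n+1)$ is constructible, it contains a dense open set, hence the general point $y$. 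With this replacement (plus the standard perturbation making a free curve embedded when $n\ge 3$), your argument goes through and yields the stated bound.
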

In \cite{KMM1}, they also remarked that their proof can be modified to improve it to a bound which is asymptotically $\frac{n^2}{4}$. 
As a consequence of Theorem~\ref{KMM}, we know the $n$-dimensional Fano manifolds of $\rho=1$ form a bounded family. In this direction, J. M. Hwang and S. Kebekus studied the minimal length of chains of minimal rational curves needed to join two general points \cite{HK}. In the previous article \cite{Wa}, we computed the minimal length in some cases. For example, we dealt with the case where the dimension of $X$ is at most $5$. As a corollary, we provided a better bound on the degree of Fano $5$-folds of $\rho=1$.

Let $X$ be a Fano $n$-fold of $\rho=1$, ${\rm RatCurves}^n(X)$ the normalization of the space of rational curves on $X$ (see \cite[II. Definition-Proposition~2.11]{Ko}) and $\K$ a {\it minimal rational component}, which is a dominating irreducible component of ${\rm RatCurves}^n(X)$ whose anticanonical degree is minimal among such families. As in \cite[Assumption~2.1]{HK}, assume that for general $x \in X$, 

\begin{enumerate}
\item $\K_x:=\{[C]\in \K|x \in C \}$ is irreducible, and
\item $p:=\dim \K_x >0$.
\end{enumerate}
Remark that all known examples with $p>0$ satisfy the first condition (\cite[Remark~2.2]{HK}). Furthermore if $p=0$, our problem dealing in this paper is easy (see Remark~\ref{remark} and \cite[Remark~2.2]{HK}). 

We denote by $l_{\K}$ the minimal length of chains of general $\K$-curves needed to join two general points (for a precise definition, refer to Definition~\ref{defil}). 
In this paper, we give a sharp bound of the length $l_{\K}$ by a fundamental argument under the mild assumptions $\rm (i)$ and $\rm (ii)$. Our main result is
\begin{them}\label{MT} Let $X$ be a Fano $n$-fold of $\rho=1$ and $\K$ a minimal rational component of $X$ such that $\K_x$ is irreducible of dimension $p>0$ for general $x \in X$. Then we have 
\begin{eqnarray} 
\lfloor \frac{n-1}{p+1}\rfloor +1 \leq l_{\K} \leq \lfloor \frac{n-p}{2}\rfloor +1, \nonumber 
\end{eqnarray}
where $\lfloor d \rfloor$ is the largest integer $\leq d$.
\end{them}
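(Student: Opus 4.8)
The plan is to prove the two inequalities separately, and the key tool on both sides is the \emph{variety of minimal rational tangents} (VMRT) $\C_x \subset \PP(T_xX)$, whose dimension is $p$ for general $x$ by assumption. The guiding geometric principle is that a single general $\K$-curve through $x$ sweeps out, infinitesimally, a $(p+1)$-dimensional cone of tangent directions, so a chain of length $\ell$ starting at a general point $x$ can reach a subvariety of $X$ of dimension at most roughly $\ell(p+1)$; conversely, each link of an optimally short chain can be arranged so that the reachable locus grows by $2$ in dimension at the ``junction'' points where two curves meet.

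For the lower bound $\lfloor \frac{n-1}{p+1}\rfloor + 1 \le l_\K$, I would fix a general point $x$ and consider the locus $Z_\ell$ of points reachable from $x$ by a chain of at most $\ell$ general $\K$-curves. The strategy is a dimension count: set $Z_1 = \mathrm{Locus}(\K_x)$, the union of general $\K$-curves through $x$, which has dimension at most $p+1$ (a $p$-dimensional family of curves, each of dimension $1$). Inductively, $Z_{\ell+1}$ is obtained by attaching one more general $\K$-curve to each point of $Z_\ell$, so $\dim Z_{\ell+1} \le \dim Z_\ell + (p+1)$, giving $\dim Z_\ell \le \ell(p+1)$. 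Since two general points are joined by a chain of length $l_\K$, the locus $Z_{l_\K}$ must be all of $X$, hence $l_\K(p+1) \ge n$, which rearranges to $l_\K \ge \frac{n}{p+1}$ and then to the stated floor bound $l_\K \ge \lfloor \frac{n-1}{p+1}\rfloor + 1$. The delicate point is justifying that the dimension of the reachable locus really increases by exactly at most $p+1$ at each step, and that one may work with general curves in the chain without the count collapsing; this should follow from the irreducibility of $\K_x$ and a standard argument that the attaching construction is dominated by an incidence variety of the predicted dimension.

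For the upper bound $l_\K \le \lfloor \frac{n-p}{2}\rfloor + 1$, the idea is to exploit the fact that when building a chain one has \emph{two} endpoints to vary rather than one: if $W$ is the locus reachable from a general $x$ and $W'$ is the locus reachable from a general $y$, then $x$ and $y$ are connected once $W$ and $W'$ meet, i.e.\ once $\dim W + \dim W' \ge n$. Working this way, the total dimension gained per link can be taken to be $2$ once the first link (which contributes $p+1$ because it emanates from a single point) is accounted for. Concretely, after the initial $\K$-curve through $x$ one expects each successive general $\K$-curve to increase the reachable dimension by roughly $2$, so a chain of length $\ell$ reaches dimension at least about $(p+1) + 2(\ell-1)$; setting this $\ge \lceil n/2 \rceil$ (since we may use symmetric chains from both $x$ and $y$) and solving for $\ell$ produces the bound $\ell \le \lfloor \frac{n-p}{2}\rfloor + 1$.

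The main obstacle, and the step I expect to require the most care, is the upper bound: one must show that a \emph{general} pair of points is actually joined by a chain this short, which means producing, for two general points, curves whose attaching loci genuinely grow by $2$ per step rather than by $1$. This requires controlling the dimension of the intersection of the two reachable loci and ruling out degenerate behaviour where the loci meet in too small (or empty) an intersection. I would handle this by a careful incidence-variety argument together with the bend-and-break philosophy underlying Theorem~\ref{KMM}, invoking the irreducibility assumption $\mathrm{(i)}$ and positivity of $p$ from $\mathrm{(ii)}$ to guarantee that the relevant families dominate as expected; the interplay between the VMRT dimension $p$ and the gain of $2$ at each junction is exactly what produces the $\frac{n-p}{2}$ shape of the bound.
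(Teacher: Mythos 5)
Your lower bound argument is sound and is essentially the paper's: bend-and-break gives $d_1=p+1$ and $d_{m+1}\le d_m+(p+1)$ (Proposition~\ref{in}), hence $n=d_{l_{\K}}\le l_{\K}(p+1)$ and $l_{\K}\ge\lceil n/(p+1)\rceil=\lfloor (n-1)/(p+1)\rfloor+1$. The upper bound, however, rests on a principle that is false. You claim that the loci $W$, $W'$ reachable from two general points $x$, $y$ must meet as soon as $\dim W+\dim W'\ge n$; complementary dimension does not force intersection, even on a Fano manifold of Picard number $1$, and the paper itself contains the counterexample. On the Lagrangian Grassmannian $LG(3,6)$ one has $(n,p,l_{\K})=(6,2,3)$, so the loci swept out by minimal rational curves through two general points each have dimension $p+1=3$ and $3+3=6=n$, yet two general points are \emph{not} joined by a chain of length $2$. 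Worse, if your principle were valid, your symmetric two-sided count would give a bound of roughly $\frac{n}{2}-p+1$, strictly smaller than $\lfloor\frac{n-p}{2}\rfloor+1$ once $p\ge 2$, contradicting the sharpness of the theorem (for $LG(3,6)$ your count gives $2$; the true value is $3$). A dimension count forces an intersection only after one of the loci is dense in $X$, which is exactly why the paper works one-sidedly, with a single growing locus $V_x^m$ and its dimension $d_m$, and never with chains emanating from both endpoints.

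The second gap is that the statement you rightly identify as the crux --- each new link raises the reachable dimension by at least $2$ --- is never proved; ``a careful incidence-variety argument together with the bend-and-break philosophy'' restates the goal rather than supplying a mechanism, and bend-and-break by itself only bounds the gain from \emph{above} by $p+1$. The paper's mechanism (Proposition~\ref{key}) is also subtler than ``each step gains $2$'': one shows that \emph{if} some step gains exactly $1$, i.e.\ $d_{m+1}=d_m+1$, then the universal-family locus $\widetilde{W^{m+1}_x}$ is contained in the closure of $\widetilde{W^m_x}$, so the next step gains nothing, $d_{m+2}=d_{m+1}$; and a step gaining nothing forces $d_{m+1}=n$ by the Picard-number-$1$ ampleness argument of Lemma~\ref{dim}~(ii) (the Koll\'ar--Miyaoka--Mori argument). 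Thus the gain-of-$2$ holds only strictly before the terminal step, and it is obtained by contradiction via stabilization plus density, not by exhibiting curves whose attaching loci ``genuinely grow by $2$.'' With that proposition in hand the one-sided count $n\ge (p+1)+2(l_{\K}-2)+1$ gives $l_{\K}\le\lfloor\frac{n-p}{2}\rfloor+1$; without it (or an equivalent), your upper bound has no support.
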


Remark that the lower bound comes from \cite[Proposition~2.4]{HK} (see Proposition~\ref{in}) directly. Our main contribution is to establish the sharp upper bound. As a byproduct, applying the argument of \cite[Proof of the Theorem~Step~3, Corollary~1]{KMM1}, this theorem implies the following:

\begin{cor}\label{co} Let $X$ be a Fano manifold as in Theorem~\ref{MT}. Then the following holds. 
\begin{enumerate}
\item Two general points on $X$ can be connected by a smooth rational curve whose anticanonical degree is at most $(p+2)(\lfloor \frac{n-p}{2}\rfloor +1) \leq \frac{(n+3)^2}{8}$. 
\item $(-K_X)^n \leq \{(p+2)(\lfloor \frac{n-p}{2}\rfloor +1)\}^n \leq \{\frac{(n+3)^2}{8}\}^n$, where $-K_X$ stands for the anticanonical divisor of $X$.
\end{enumerate}
\end{cor}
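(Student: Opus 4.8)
My plan is to read off both assertions from Theorem~\ref{MT}, exactly as the remark preceding the corollary suggests, by combining it with two standard facts about minimal rational curves on a Fano manifold of Picard number $1$; the template is \cite[Proof of the Theorem, Step~3, and Corollary~1]{KMM1}. The first input is the anticanonical degree of a single $\K$-curve: for a minimal rational component one has the defining relation $\dim\K_x = -K_X\cdot C - 2$ for a general member $[C]\in\K_x$, so $-K_X\cdot C = p+2$. The second input is Theorem~\ref{MT} itself, namely that two general points of $X$ are joined by a chain of $l_{\K}$ general $\K$-curves with $l_{\K}\le \lfloor\frac{n-p}{2}\rfloor+1$. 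Hence such a chain has total anticanonical degree $(p+2)\,l_{\K}\le (p+2)\big(\lfloor\frac{n-p}{2}\rfloor+1\big)$.

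To obtain assertion~(i) I would then smooth the chain. The $\K$-curves occurring in it are general members of a dominating family, hence free, and the chain joins two general points; under these hypotheses the smoothing technique of \cite{KMM1} deforms the chain to an irreducible smooth rational curve through the two points whose anticanonical degree equals the sum of the degrees of the components. This produces the required curve and gives the first inequality of~(i).

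The second inequality of~(i) is elementary. The quantity $(p+2)\big(\lfloor\frac{n-p}{2}\rfloor+1\big)$ is half of the product of $p+2$ with $2\big(\lfloor\frac{n-p}{2}\rfloor+1\big)$, and $2\big(\lfloor\frac{n-p}{2}\rfloor+1\big)\le (n-p)+2$; applying the AM--GM inequality to this pair of factors — whose sum is controlled by $n+3$ once the parity correction supplied by the floor is used — bounds their product by $\tfrac{(n+3)^2}{4}$, hence the original quantity by $\tfrac{(n+3)^2}{8}$. Thus the smooth curve of~(i) has anticanonical degree at most $(p+2)\big(\lfloor\frac{n-p}{2}\rfloor+1\big)$, which in turn is at most $\tfrac{(n+3)^2}{8}$.

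Finally, (ii) follows from~(i) by the degree estimate of \cite[Corollary~1]{KMM1}. Since $\rho(X)=1$ we may write $\operatorname{Pic}(X)=\ZZ H$ with $-K_X=rH$; once two general points are joined by a rational curve $C$ with $-K_X\cdot C\le d:=(p+2)\big(\lfloor\frac{n-p}{2}\rfloor+1\big)$, that estimate yields $(-K_X)^n\le (-K_X\cdot C)^n\le d^n$, and feeding in the numerical bound from~(i) gives the full chain of inequalities. I expect the one genuinely geometric step to be the smoothing in the second paragraph: the point to check carefully is that the chain furnished by Theorem~\ref{MT} really is a chain of \emph{free} rational curves attached so that \cite{KMM1} applies, so that its smoothing is an honest smooth rational curve of the same degree rather than merely a connected nodal curve; the degree estimate for~(ii) and the AM--GM step are then formal.
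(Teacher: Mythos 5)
Your geometric skeleton is exactly the paper's intended argument: the paper offers no proof of this corollary beyond the sentence pointing to \cite[Proof of the Theorem~Step~3, Corollary~1]{KMM1}, and your steps --- the degree $-K_X\cdot C=p+2$ of a minimal curve (from $p=\dim\K_x=-K_X\cdot C-2$), a chain of $l_{\K}\leq\lfloor\frac{n-p}{2}\rfloor+1$ such curves through two general points supplied by Theorem~\ref{MT}, the smoothing of that chain of free curves via \cite{KMM1}, and the estimate $(-K_X)^n\leq(-K_X\cdot C)^n$ from \cite[Corollary~1]{KMM1} --- are precisely what that citation is meant to supply. So the first inequality of (i), and the deduction of (ii) from (i), are correct and match the paper.

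The genuine gap is in the step you dismiss as formal. The inequality $(p+2)(\lfloor\frac{n-p}{2}\rfloor+1)\leq\frac{(n+3)^2}{8}$ does not follow from AM--GM, and is in fact false when $n-p$ is even: in that case $2(\lfloor\frac{n-p}{2}\rfloor+1)=n-p+2$, so the two factors $p+2$ and $n-p+2$ sum to $n+4$, not $n+3$; there is no ``parity correction'' available, and AM--GM only yields $\frac{(n+4)^2}{8}$. Concretely, take $(n,p)=(4,2)$, realized by $X=Q^4$ (lines through a general point form an irreducible surface, so the hypotheses of Theorem~\ref{MT} hold): then $(p+2)(\lfloor\frac{n-p}{2}\rfloor+1)=4\cdot 2=8$, while $\frac{(n+3)^2}{8}=\frac{49}{8}<8$; and indeed on $Q^4$ the cheapest smooth rational curve through two general points is a conic, of anticanonical degree $2n=8$, so even the geometric assertion of (i) fails with the constant $\frac{(n+3)^2}{8}$. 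This is an error inherited from the paper's own statement, but your proof cannot close it, because no proof exists. Everything is repaired by replacing $\frac{(n+3)^2}{8}$ with $\frac{(n+4)^2}{8}$ (and correspondingly $\{\frac{(n+4)^2}{8}\}^n$ in (ii)): your AM--GM argument then works verbatim after splitting into the two parity cases, with factor sum $n+3$ when $n-p$ is odd and $n+4$ when $n-p$ is even, and the asymptotic content $\sim\frac{n^2}{8}$ of the corollary is unchanged.
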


This paper is organized as follows: In Section $2$, we give a precise definition of the {\it length} of chains of minimal rational curves. In Section $3$, we give a proof of our main theorem via a fundamental approach. In Section $4$, we investigate Fano manifolds whose {\it varieties of minimal rational tangents} have low-dimensional secant varieties. In Section $5$, we study the lengths of Fano manifolds of dimension $\leq 7$ by applying some previous results. In this paper, we work over the complex number field.

\section{Definition of length}\label{Dl}

\begin{defi} \rm 
\begin{enumerate} 
\item By a {\it variety}, we mean an integral separated scheme of finite type over the complex number field. We call a $1$-dimensional proper variety a {\it curve}. A {\it manifold} means a smooth variety. 
\item For a rational curve $C$ on a manifold $X$, let $f:\PP^1 \rightarrow C \subset X$ be the normalization. Then $C$ is {\it free} if $f^*T_X$ is semipositive, where $T_X$ stands for the tangent bundle of $X$. 
\item For a projective variety $X$ and a rational curve $C$ on $X$, $C$ is {\it $\K$-curve} if $[C]$ is contained in a subset $\K \subset {\rm RatCurves}^n(X)$.
\item For a projective variety $X$ and an irreducible component $\K$ of ${\rm RatCurves}^n(X)$, $\K$ is a {\it dominating family} if for a general point $x \in X$ there exists a $\K_x$-curve. 
\item For a Fano manifold $X$, a {\it minimal rational component} means a dominating irreducible component of ${\rm RatCurves}^n(X)$ whose anticanonical degree is minimal among such families.
\item For a vector space $V$, $\PP(V)$ denotes the projective space of lines through the origin in $V$.
\end{enumerate}
\end{defi}

Except Theorem~\ref{ke} and Lemma~\ref{fl}, we always assume the following throughout this section.

\begin{NA}\label{NA} \rm 

Let $X$ be a Fano $n$-fold of $\rho=1$, $\K$ a minimal rational component of $X$ such that for general $x \in X$, 
\begin{enumerate}
\item $\K_x:=\{[C]\in \K|x \in C \}$ is irreducible, and
\item $p:=\dim \K_x >0$.
\end{enumerate}

Notice that $p$ does not depend on the choice of a minimal rational component $\K$. It is a significant invariant of $X$. 

Let $\pi: \U \rightarrow \K$ and $\iota: \U \rightarrow X$ be the associated universal morphisms. 
Remark that $\pi$ is a {\it $\PP^1$-bundle} in the sense of \cite[II. Definition~2.5]{Ko}, that is, smooth, proper and for every $z \in \K$ the fiber $\pi^{-1}(z)$ is a rational curve (\cite[II. Corollary~2.12]{Ko}).
\end{NA}

\begin{them}[{\cite[Theorem~3.3]{Ke2}}]\label{ke}
Let $X$ be a normal projective variety and $\K \subset {\rm RatCurves}^n(X)$ a dominating family of rational curves of minimal degrees. Then, for a general point $x \in X$, there are only finitely many $\K_x$-curves which are singular at $x$.\end{them}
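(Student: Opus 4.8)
The plan is to argue by contradiction using bend-and-break, the fundamental degeneration technique for families of minimal rational curves. If $\dim\K_x=0$ the assertion is trivial, so assume $\K_x$ is positive-dimensional and suppose, for general $x$, that the subset $B\subseteq\K_x$ of $\K_x$-curves singular at $x$ is positive-dimensional; choose an irreducible curve $T\subseteq B$. First I would form the restricted universal family $\pi_T\colon S:=\U\times_\K T\to T$; after normalizing $T$ and $S$ this is a ruled surface over a smooth curve, equipped with the evaluation morphism $g\colon S\to X$ induced by $\iota$. For general $x$ a general $\K$-curve meets $x$ in a single point, so there is a well-defined section $\sigma\colon T\to S$ with $g\circ\sigma\equiv x$; write $\Sigma:=\sigma(T)$, an irreducible curve contracted by $g$ to the point $x$.

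The heart of the matter is that \emph{every} fibre of $\pi_T$ is singular at $x$, so the relative differential of $g$ vanishes along $\Sigma$: the morphism $g$ is not merely contracting $\Sigma$ but is ramified along it in the fibre direction. I would exploit this extra ramification as the second incidence constraint that Mori's degeneration lemma demands, since a one-parameter family of rational curves through the single fixed point $x$ does not by itself force a degeneration. After fixing an ample $H$ on $X$ and computing $g^{*}H$ on the ruled surface $S$, one sees that $g^{*}H\cdot\Sigma=0$ while $g^{*}H$ meets a ruling in $-K_X\cdot C=p+2$; the ramification of $g$ along $\Sigma$ is what forces the numerical positivity of $\Sigma$ required to trigger bend-and-break, so that some fibre of $\pi_T$ must degenerate into a reducible or non-reduced rational curve through $x$. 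A component of such a degenerate member is then a rational curve through the general point $x$ whose anticanonical degree is strictly smaller than $p+2$, contradicting the minimality of the degree of $\K$. Hence $B$ is finite.

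The main obstacle is precisely the middle step: converting the pointwise condition ``singular at $x$'' into the exact numerical estimate on $\Sigma$ (equivalently, on the order of ramification of $g$ along $\Sigma$) that legitimately activates Mori's lemma, and confirming that the component split off genuinely violates minimality rather than merely reproducing a member of $\K$. A naive deformation count is only suggestive: for a minimal free curve one has $f^{*}T_X\cong\O(2)\oplus\O(1)^{\oplus p}\oplus\O^{\oplus(n-1-p)}$, and prescribing both $f(0)=x$ and $df_{0}=0$ confines the relevant infinitesimal deformations to $H^{0}(\PP^{1},f^{*}T_X\otimes\O(-2))$, whose space of sections is at most one-dimensional and thus predicts a finite family. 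But a map ramified at $0$ can be obstructed and its deformations can jump, so this count yields only the \emph{expected} dimension; upgrading it to a genuine bound is exactly what the degeneration argument above must supply. A secondary point needing care is the existence and unambiguity of the section $\sigma$, which rests on the generality of $x$ together with the minimality of $\K$ to guarantee that the marked point lying over $x$ is unique.
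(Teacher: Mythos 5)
First, a structural point: the paper does not prove this statement at all — it is quoted verbatim from Kebekus \cite[Theorem~3.3]{Ke2} (with a remark that the original holds under weaker hypotheses), so your attempt has to be measured against Kebekus's proof rather than anything in this paper. Against that standard there is a genuine gap, and it sits exactly where you yourself locate it. A curve singular at $x$ is of one of two kinds: it has several smooth branches at $x$, or a single non-immersed (cuspidal) branch. Your setup collapses this dichotomy. In the several-branches case the claimed section $\sigma$ does not exist — $g^{-1}(x)$ is a multisection of degree at least $2$ over $T$ — and the relative differential of $g$ does not vanish along it; ironically, this is the easy case, because after base change along that multisection one obtains \emph{two} sections contracted to $x$, which is precisely the two-fixed-point situation where Mori's bend-and-break, combined with properness of $\K_x$ and minimality of the degree, gives the contradiction. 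In the cuspidal case, where your section and ramification statements are correct, the proposed argument does not close: bend-and-break with a \emph{single} fixed point has no breaking conclusion (lines through a point of $\PP^n$ move in a positive-dimensional family without degenerating), and you give no mechanism by which ramification of $g$ along the contracted section $\Sigma$ supplies the missing second constraint. Worse, since minimality makes $\K_x$ proper for general $x$, a complete curve $T\subseteq\K_x$ carries an honest $\PP^1$-bundle $\U\times_\K T\to T$, whose fibers literally cannot degenerate into reducible or non-reduced cycles; so the conclusion you want bend-and-break to force is impossible inside this family, and a correct argument must instead derive a numerical contradiction (e.g.\ that the family is constant in moduli) or work with a different, non-proper family. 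Your own text concedes that converting ``cuspidal at $x$'' into a hypothesis that ``legitimately activates Mori's lemma'' is unresolved — that concession \emph{is} the theorem's content in the cuspidal case, which is exactly the delicate part of Kebekus's paper and is not handled there by a formal appeal to bend-and-break.

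Two smaller defects. Your fallback count — imposing $f(0)=x$ and $df_0=0$ leaves only $H^0(\PP^1,f^*T_X\otimes\O_{\PP^1}(-2))$, small for the standard splitting $\O(2)\oplus\O(1)^{\oplus p}\oplus\O^{\oplus(n-1-p)}$ — is not only an expected-dimension estimate, as you note, but is computed for the wrong locus: a cuspidal normalization forces a subsheaf $\O(3)\hookrightarrow f^*T_X$ (the vanishing of $df$ at the cusp lets $\O(2)\to f^*T_X$ factor through $\O(3)$), so cuspidal members of $\K_x$ never have the standard splitting type, and the generic-splitting computation says nothing about them. Also, the statement is for $X$ normal projective, where $-K_X$ need not be available; the degree bookkeeping should be done with an arbitrary ample $H$ and the minimal $H$-degree, though this is cosmetic compared with the main gap.
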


\begin{rem}\rm The original statement of Theorem~\ref{ke} is proved under much weaker assumptions. For detail, see \cite[Theorem~3.3]{Ke2}. 
\end{rem}

Let $X$ and $\K$ be as in the Notation-Assumptions~\ref{NA}. 
From Theorem~\ref{ke} and a well-known argument similar to the one used in the proof of \cite[II. Theorem~3.11]{Ko}, we know there exists a non-empty open subset $X^0 \subset X$ satisfying 

\begin{enumerate}
\item any $\K$-curve meeting $X^0$ is free, and
\item for any $x \in X^0$, there are only finitely many $\K_x$-curves which are singular at $x$.
\end{enumerate}

Here $\K^0:=\pi(\iota^{-1}(X^0)) \subset \K$ and $\U^0:=\pi^{-1}(\K^0) \subset \U$ are open subsets. Then we have the universal family of $\K^0$, that is, $\pi_0:=\pi|_{\U^0}: \U^0 \rightarrow \K^0$ and $\iota_0:=\iota|_{\U^0}: \U^0 \rightarrow X$. Since any $\K^0$-curve is free, $\iota_0: \U^0 \rightarrow X$ is smooth (see \cite[II. Theorem~2.15, Corollary~3.5.3]{Ko}). 

\begin{lem}\label{irr} For general $x \in X^0$, ${\iota_0}^{-1}(x)$ is irreducible.
\end{lem}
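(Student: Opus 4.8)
The plan is to analyze the restriction of $\pi_0$ to the fiber $\iota_0^{-1}(x)$ and to exploit that this restriction is generically one-to-one onto the irreducible base $\K_x$. Fix a general $x \in X^0$ and set $F := \iota_0^{-1}(x)$. Since $\iota_0 : \U^0 \rightarrow X$ is smooth, $F$ is a smooth scheme; moreover, as $\U^0$ and $X$ are irreducible and $\iota_0$ is smooth hence flat, $F$ is a fiber of a flat morphism of irreducible varieties, so every irreducible component of $F$ has the same dimension $r := \dim \U^0 - n$. Consider $\tau := \pi_0|_F : F \rightarrow \K^0$. A point of $F$ is a point $u \in \U^0$ with $\iota_0(u) = x$, and its image $\tau(u) = [C]$ satisfies $x \in C$; thus $\tau$ factors through $\K^0_x := \{[C] \in \K^0 \mid x \in C\}$, a dense open subset of the irreducible variety $\K_x$ (using that $\K_x$ is irreducible for general $x$). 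Conversely every $[C] \in \K^0_x$ lies in the image, so $\tau : F \rightarrow \K^0_x$ is surjective.

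Next I would show that $\tau$ is quasi-finite. The fiber of $\tau$ over $[C] \in \K^0_x$ consists of the points of $\pi^{-1}([C]) \cong \PP^1$ mapping under $\iota$ to $x$; writing $f_C : \PP^1 \rightarrow C \subset X$ for the normalization, this fiber is exactly $f_C^{-1}(x)$, a finite set. Hence $\tau$ has finite fibers. Combined with the equidimensionality of the components of $F$, this forces each irreducible component $Z$ of $F$ to satisfy $\dim \tau(Z) = \dim Z = r$; since $\tau(Z)$ is then an irreducible $r$-dimensional subvariety of $\K^0_x$, and $\K^0_x$ is irreducible of dimension $p$, we obtain $r = p$ and $\tau(Z) = \K^0_x$. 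In other words, every irreducible component of $F$ dominates $\K^0_x$.

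To conclude, I would count preimages over a general curve. By the defining property of $X^0$, for our $x$ there are only finitely many $\K_x$-curves singular at $x$, so for a general $[C] \in \K^0_x$ the curve $C$ is immersed and injective at $x$; hence $f_C^{-1}(x)$ is a single point and $\#\tau^{-1}([C]) = 1$. If $F$ had two distinct irreducible components $Z_1, Z_2$, both would dominate $\K^0_x$ by the previous step, and since the components of the smooth scheme $F$ are disjoint, a general $[C]$ would admit at least two distinct preimages in $F$, contradicting $\#\tau^{-1}([C]) = 1$. Therefore $F$ has a single irreducible component, and being reduced (indeed smooth) it is irreducible.

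The argument is essentially formal once these inputs are assembled; the one point deserving genuine care is ruling out low-dimensional extra components of $F$, which is precisely where I use the equidimensionality of the fibers of the flat morphism $\iota_0$ together with the quasi-finiteness of $\tau$. The finiteness of the locus of $\K_x$-curves singular at $x$ supplied by Theorem~\ref{ke} (through the construction of $X^0$) is what upgrades ``generically finite'' to ``generically one point,'' and this is the decisive ingredient of the proof.
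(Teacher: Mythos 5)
Your proof is correct and follows essentially the same route as the paper's: the paper likewise considers the surjection $\pi:\iota_0^{-1}(x)\to\K_x$, uses smoothness of $\iota_0$ to get equidimensionality of the fiber, and invokes the irreducibility of $\K_x$ (of positive dimension) together with the finiteness of $\K_x$-curves singular at $x$ to conclude irreducibility. You have merely spelled out the steps the paper leaves implicit (quasi-finiteness of the restriction of $\pi_0$, each component dominating $\K_x$, and generic one-to-one-ness forcing a single component), including the point where the standing assumption $p>0$ is needed so that the finitely many singular curves form a proper subset of $\K_x$.
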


\begin{proof} For a general point $x \in X^0$, we have a surjective morphism $\pi : {\iota_0}^{-1}(x) \rightarrow \K_x$. The smoothness of $\iota_0 :\U^0 \rightarrow X$ implies that ${\iota_0}^{-1}(x)$ is equidimensional. Since $\K_x$ is irreducible of positive dimension and there are only finitely many $\K_x$-curves which are singular at $x$, ${\iota_0}^{-1}(x)$ is irreducible.
\end{proof}

Replacing $X^0$ with a smaller open subset of $X$, we may assume 

\begin{enumerate} 
\item[\rm (iii)] ${\rm for~any}~x \in X^0, {\iota_0}^{-1}(x)~{\rm is~irreducible.}$
\end{enumerate}

\begin{defi}\rm For general $x \in X^0$, define inductively 
\begin{enumerate}
\item $V_x^0:=\{x\}$, and 
\item $V_x^{m+1}:={\iota_0}({\pi_0}^{-1}({\pi_0}({\iota_0}^{-1}(V_x^m \cap X^0))))$.
\end{enumerate}
\end{defi}

\begin{lem}\label{fl} Let $f:X \rightarrow Y$ be a flat morphism between varieties with irreducible fibers and $W$ an irreducible constructible subset of $Y$. Then $f^{-1}(W)$ is irreducible.
\end{lem}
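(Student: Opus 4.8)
The plan is to show directly that $f^{-1}(W)$ cannot be written as a union of two proper closed subsets, using that every fibre is irreducible and that a flat morphism of finite type is open. First I would record two preliminaries. Since $f$ is flat and of finite type, it is open; and since every fibre is irreducible, in particular nonempty, $f$ is surjective, so $f^{-1}(W)\neq\emptyset$. Moreover, for every $y\in W$ the fibre $f^{-1}(y)$ is contained in $f^{-1}(W)$, and $f^{-1}(W)=\bigcup_{y\in W}f^{-1}(y)$.

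Suppose for contradiction that $f^{-1}(W)=F_1\cup F_2$ with $F_1,F_2$ proper closed subsets of $f^{-1}(W)$. Fix $y\in W$. Since $f^{-1}(y)$ is irreducible and $f^{-1}(y)=(f^{-1}(y)\cap F_1)\cup(f^{-1}(y)\cap F_2)$ is a union of two subsets closed in $f^{-1}(y)$, we must have $f^{-1}(y)\subseteq F_1$ or $f^{-1}(y)\subseteq F_2$. Accordingly I set $W_i:=\{\,y\in W: f^{-1}(y)\subseteq F_i\,\}$, so that $W=W_1\cup W_2$.

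The key step is to prove that each $W_i$ is closed in $W$, and this is exactly where openness of $f$ enters. Write $G_i:=f^{-1}(W)\setminus F_i$, an open subset of $f^{-1}(W)$, and choose an open $O_i\subseteq X$ with $G_i=O_i\cap f^{-1}(W)$. A direct check gives $f(G_i)=f(O_i)\cap W$: the inclusion $\subseteq$ is immediate, and for $\supseteq$ any $y\in f(O_i)\cap W$ has a preimage $x\in O_i$, which then lies in $O_i\cap f^{-1}(W)=G_i$. Since $f$ is open, $f(O_i)$ is open in $Y$, hence $f(G_i)=f(O_i)\cap W$ is open in $W$. Finally, for $y\in W$ one has $y\notin f(G_i)$ iff $f^{-1}(y)\cap G_i=\emptyset$ iff $f^{-1}(y)\subseteq f^{-1}(W)\setminus G_i=F_i$; thus $W_i=W\setminus f(G_i)$ is closed in $W$. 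Now $W=W_1\cup W_2$ is a union of two closed subsets of the irreducible set $W$, so $W=W_1$ (say). Then $f^{-1}(y)\subseteq F_1$ for every $y\in W$, whence $f^{-1}(W)=\bigcup_{y\in W}f^{-1}(y)\subseteq F_1$, contradicting that $F_1$ is proper. Therefore $f^{-1}(W)$ is irreducible.

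I expect the main obstacle to be precisely the closedness of the $W_i$: the irreducibility of the fibres only gives a \emph{fibrewise} dichotomy, and the real work is upgrading this into a topological decomposition of $W$ into two closed pieces. The identity $f(G_i)=f(O_i)\cap W$ together with openness of the flat morphism $f$ is what accomplishes this upgrade, after which irreducibility of $W$ finishes the argument immediately. (An alternative route would be to reduce to $W=\overline{W}$ closed irreducible by a density/sandwich argument, and then prove $f^{-1}(\overline W)$ irreducible via going-down for the flat morphism applied to generic points of its components; but the openness argument above seems shorter and avoids base change.)
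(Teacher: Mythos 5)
Your proof is correct. Note that the paper itself gives no argument for this lemma at all: it simply declares it well known and cites \cite[Lemma~5.3]{De}, so your self-contained proof is necessarily a different route from the paper's. What you prove is the standard topological fact that an open continuous surjection with irreducible fibers onto an irreducible base has irreducible source, applied to the restriction $f^{-1}(W)\to W$. All the steps check out: flatness plus finite type gives openness of $f$; your identity $f\bigl(O_i\cap f^{-1}(W)\bigr)=f(O_i)\cap W$ is exactly the statement that openness survives restriction over the subset $W$; the fibrewise dichotomy, the closedness of $W_i=W\setminus f(G_i)$ in $W$, and the final contradiction with properness of $F_1$ are all sound. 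Two features of your argument are worth highlighting. First, flatness enters only through openness, so the lemma as you prove it holds for any open morphism (indeed any open continuous map) with irreducible fibers. Second, you never use the constructibility of $W$: the statement you actually establish is purely topological and valid for an arbitrary irreducible subset $W$ of $Y$, which is slightly more general than what the lemma asserts and makes transparent why the hypotheses are what they are --- ``flat between varieties'' is just a convenient algebro-geometric certificate of openness. The only mild caveat is interpretive rather than mathematical: one should read ``irreducible fibers'' as applying to the fibers over all points of $W$ (equivalently, all scheme-theoretic fibers), which is the natural reading and the one needed in the paper's application to $\iota_0$ and $\pi_0$.
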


\begin{proof} This is a well-known fact. For instance, see \cite[Lemma~5.3]{De}.\end{proof}

Let consider $W^m_x:={\iota_0}^{-1}(V_x^m \cap X^0)$ and $\widetilde{W^m_x}:={\pi_0}^{-1}({\pi_0}({\iota_0}^{-1}(V_x^m \cap X^0)))$. 

\begin{lem}\label{dim} For general $x \in X^0$, the following holds.
\begin{enumerate}
\item $V_x^m$, $W^m_x$ and $\widetilde{W^m_x}$ are irreducible constructible subsets.
\item If $\dim V_x^m = \dim V_x^{m+1}$, we have $\dim V_x^m=n$. 
\item If $\dim V_x^m < n$, we have $\dim W^m_x=\dim V_x^m + p$ and $\dim \widetilde{W^m_x}=\dim V_x^m + p+1$.
\end{enumerate}
\end{lem}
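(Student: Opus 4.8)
The plan is to prove the three assertions of Lemma~\ref{dim} in order, using induction on $m$ together with the two fundamental tools already at hand: Lemma~\ref{fl} (flatness plus irreducible fibers preserves irreducibility under preimage) and the smoothness/equidimensionality of the universal morphisms over $X^0$.

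\begin{proof}
We argue by induction on $m$. For $m=0$ all three sets are built from the point $V_x^0=\{x\}$, and irreducibility is immediate; so assume the statements hold for $m$ and in particular that $V_x^m$ is irreducible.

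\emph{Part (i).}
First I would treat $W_x^m={\iota_0}^{-1}(V_x^m \cap X^0)$. Since $\iota_0:\U^0 \to X$ is smooth, it is flat, and by condition (iii) its fibers are irreducible; hence Lemma~\ref{fl} applied to $\iota_0$ and the irreducible set $V_x^m \cap X^0$ gives that $W_x^m$ is irreducible. Next, ${\pi_0}({W_x^m})$ is the continuous image of an irreducible set, hence irreducible, and $\widetilde{W_x^m}={\pi_0}^{-1}({\pi_0}(W_x^m))$ is irreducible because $\pi_0$ is a $\PP^1$-bundle (so flat with irreducible fibers) and Lemma~\ref{fl} applies again. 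Finally $V_x^{m+1}={\iota_0}(\widetilde{W_x^m})$ is the image of an irreducible set, so it is irreducible, which closes the induction for (i). Constructibility is automatic since each operation is an image or preimage of a constructible set under a morphism of finite type (Chevalley).

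\emph{Part (iii).}
The fiber dimension computations are where the real content lies, so I would do these before (ii). Over $X^0$ the map $\iota_0$ is smooth of relative dimension $p+1$ (the total space $\U^0$ has dimension $n+p$ since $\dim\K=n+p-1$ and $\pi$ adds one), so $\dim W_x^m = \dim(V_x^m\cap X^0) + (p+1)$ provided $W_x^m$ dominates no larger set than $V_x^m$; but I must subtract one because the curves through a fixed point already account for a $\K_x$-direction. The cleaner bookkeeping is: $\dim \widetilde{W_x^m}=\dim{\pi_0}(W_x^m)+1$ since $\pi_0$ has one-dimensional fibers, and $\dim{\pi_0}(W_x^m)=\dim W_x^m - p$ because a general point of $V_x^m$ lies on the $p$-dimensional family $\K_x$. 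Combining with $\dim W_x^m=\dim V_x^m+p$ (from smoothness of $\iota_0$ of relative dimension $p$ once we account for the point constraint) yields $\dim\widetilde{W_x^m}=\dim V_x^m+p+1$. The hypothesis $\dim V_x^m<n$ is what guarantees these fibers have the expected generic dimension rather than dropping.

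\emph{Part (ii).}
This is the engine that makes the induction terminate. If $\dim V_x^m=\dim V_x^{m+1}$, then since $V_x^m \subset V_x^{m+1}$ and both are irreducible of the same dimension, we have $\overline{V_x^m}=\overline{V_x^{m+1}}$. I would then argue that $V_x^{m+1}$ is swept out by all $\K$-curves meeting $V_x^m$, so its closure $Z:=\overline{V_x^m}$ is stable under attaching $\K$-curves; as $\K$ is a dominating family on a variety of Picard number $1$, such a $\K$-stable closed subset of positive dimension must be all of $X$, forcing $\dim V_x^m=n$. The main obstacle is making this stability/rigidity step precise: one needs that no proper positive-dimensional subvariety can absorb an entire dominating minimal family, which rests on $\rho=1$ (any curve class is a positive multiple of the minimal one, so $\K$-curves cannot be confined to a proper subvariety through a general point). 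Managing the interplay between ``general $x$'' and the non-general points that may appear in the $V_x^m$ is the delicate bookkeeping I expect to require the most care.
\end{proof}
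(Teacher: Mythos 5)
Your part (i) is correct and matches the paper's argument (induction on $m$, Lemma~\ref{fl} applied to the flat morphisms $\iota_0$ and $\pi_0$, then taking images). The problems are in (ii) and (iii), and they are genuine gaps, not bookkeeping issues. For (iii): $\iota_0$ is smooth of relative dimension $p$, not $p+1$ (the fiber $\iota_0^{-1}(x)$ maps finite-to-one onto $\K_x$, so $\dim \U^0 = n+p$), and flatness gives $\dim W_x^m = \dim V_x^m + p$ directly, with no ad hoc ``subtract one'' correction. Your ``cleaner bookkeeping'' then asserts $\dim \pi_0(W_x^m) = \dim W_x^m - p$, which is impossible: $\pi_0$ has $1$-dimensional fibers, so restricting it to $W_x^m$ can drop dimension by at most $1$. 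The two formulas you combine do not even produce your stated conclusion, since $(\dim W_x^m - p) + 1 = \dim V_x^m + 1 \neq \dim V_x^m + p + 1$. The actual content of (iii), which you never address, is a dichotomy: $\pi_0|_{W_x^m}$ is generically finite onto its image (giving $\dim \widetilde{W_x^m} = \dim W_x^m + 1$) unless the $\K$-curves meeting $V_x^m \cap X^0$ lie in $\overline{V_x^m}$. The paper kills the bad alternative by showing it forces $\overline{\iota_0(\widetilde{W_x^m})} = \overline{\iota_0(W_x^m)}$, hence $d_{m+1} = d_m$, which by part (ii) would give $\dim V_x^m = n$, contradicting the hypothesis $\dim V_x^m < n$. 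Your one-sentence appeal to that hypothesis names the right ingredient but supplies no argument, and note that (iii) logically depends on (ii), so (ii) cannot be postponed.

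For (ii), you correctly reduce to: $Z = \overline{V_x^m}$ is a proper closed subvariety through a general point such that every $\K^0$-curve meeting $Z \cap X^0$ lies in $Z$; but your justification that this contradicts $\rho = 1$ --- ``$\K$-curves cannot be confined to a proper subvariety through a general point'' --- is false as a principle, even in this very setup: all $\K_x$-curves through a general $x$ are confined to $\overline{V_x^1}$, which is a proper subvariety whenever $p + 1 < n$. So the contradiction cannot come from confinement of curves through one point; it needs a global mechanism. The paper's (Koll\'ar--Miyaoka--Mori's) mechanism is a divisor construction: with $q = \operatorname{codim} V_x^m$, sweep the sets $V_{x'}^m$ over a general $(q-1)$-dimensional subvariety $T \subset X^0$ to obtain $H^0 = \bigcup_{x' \in T}(V_{x'}^m \cap X^0)$ with closure $H$ a divisor, which is ample because $\rho = 1$. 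A general $[C] \in \K^0$ is not contained in $H$; it cannot meet $H^0$ (else it would lie in some $\overline{V_{x'}^m} \subset H$ by the stability you established), and it avoids the lower-dimensional locus $H \setminus H^0$ by \cite[II. Proposition~3.7]{Ko} since $C$ is free and general. Hence $C \cap H = \emptyset$, contradicting ampleness. This construction is precisely the ``delicate bookkeeping'' you deferred; without it, (ii) --- and therefore also (iii) --- is unproved.
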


\begin{proof} $\rm (i)$ We prove by induction on $m$. When $m=0$, $V_x^0=\{ x \}$ is irreducible. Assume that $V_x^m$ is irreducible. Remark that ${\iota_0}:\U^0 \rightarrow X$ and ${\pi_0}: \U^0 \rightarrow \K^0$ are flat. Since ${\iota_0}^{-1}(x)~{\rm is~irreducible}$ for any $x \in X^0$, we know $W^m_x$ and $\widetilde{W^m_x}$ are irreducible from Lemma~\ref{fl}. Because $V_x^{m+1}={\iota_0} (\widetilde{W^m_x})$, $V_x^{m+1}$ is also irreducible. Hence ${\rm (i)}$ holds. \\ 
$\rm (ii)$ This is in \cite{KMM1}. For the reader's convenience, we recall their proof. First assume that there exists a rational curve $[C] \in \K^0$ which is not contained in $\overline{V_x^m}$ satisfying $C \cap (V_x^m \cap X^0) \neq \emptyset$. Then $V_x^m$ is a proper subset of $V_x^{m+1}$. This implies that $\dim V_x^m < \dim V_x^{m+1}$. Hence, if $\dim V_x^m = \dim V_x^{m+1}$, every $\K^0$-curve meeting $V_x^m \cap X^0$ is contained in $\overline{V_x^m}$. Assume that $\dim V_x^m = \dim V_x^{m+1}$ for general $x \in X$. Let $q$ be the codimension of $V_x^m$ in $X$ and $T \subset X^0$ a sufficiently general $(q-1)$-dimensional subvariety. Denote $\bigcup_{x \in T}(V_x^m \cap X^0)$ by $H^0$ and its closure by $H$. Since the Picard number of $X$ is $1$, $H$ is an ample divisor on $X$. A general member $[C] \in \K^0$ is not contained in $H$. So we have $C \cap H^0 = \emptyset$. On the other hand, we see that $C \cap (H \setminus H^0) = \emptyset$. This follows from \cite[II. Proposition~3.7]{Ko}. It concludes that $C \cap H$ is empty. However this contradicts the ampleness of $H$. \\
$\rm (iii)$ Since ${\iota_0} : \U^0 \rightarrow X$ is flat, $W^m_x \rightarrow V_x^m \cap X^0$ is a flat morphism with irreducible fibers. This implies that $\dim W^m_x=\dim V_x^m + p$. Since ${\pi_0}: \U^0 \rightarrow \K^0$ is a $\PP^1$-bundle, $\dim \widetilde{W^m_x}=\dim W^m_x$ or $\dim W^m_x+1$. If the former equality holds, we have $\overline{W_x^m} \cap \widetilde{W_x^m} = \widetilde{W_x^m}$ in $\U^0$. Here, for a subset $A \subset \U^0$, denote by $\overline{A}$ the closure of $A$ in $\U^0$. Furthermore we see that 
\begin{equation*}
\iota_0(\widetilde{W^m_x})=\iota_0(\overline{W_x^m} \cap \widetilde{W_x^m}) \subset \overline{\iota_0(W_x^m)} \cap \iota_0(\widetilde{W_x^m}) \subset \iota_0(\widetilde{W_x^m}).
\end{equation*}
This yields that $\iota_0(\widetilde{W_x^m})=\overline{\iota_0(W_x^m)} \cap \iota_0(\widetilde{W_x^m}).$ Hence $\overline{\iota_0(\widetilde{W_x^m})}=\overline{\iota_0(W_x^m)}$. This concludes that $\dim V_x^{m+1}=\dim \iota_0(\widetilde{W_x^m})=\dim {\iota_0(W_x^m)}=\dim V_x^m$. This contradicts the assumption $\dim V_x^m < n$. Thus we have $\dim \widetilde{W^m_x}=\dim W^m_x+1=\dim V_x^m + p+1$.
\end{proof}

\begin{defi}\rm For general $x \in X^0$, we denote the dimension of $V_x^m$ by $d_m$. This definition does not depend on the choice of general $x \in X^0$.
\end{defi}

\begin{pro}[{\cite[Proposition~2.4]{HK}}]\label{in} 
\begin{enumerate} 
\item $d_1=p+1$, and
\item $d_{m+1} \leq d_m + p+1$.
\end{enumerate}
\end{pro}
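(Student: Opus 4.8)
The plan is to derive everything from the dimension computation of Lemma~\ref{dim}(iii), treating (ii) as a formal consequence and isolating the genuine geometric input in the lower bound of (i).

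For (ii), I would first dispose of the trivial case $d_m=n$: then $V_x^m$ is dense in $X$, so $V_x^{m+1}$ is dense as well and $d_{m+1}=n=d_m\le d_m+p+1$. When $d_m<n$, Lemma~\ref{dim}(iii) gives $\dim\widetilde{W^m_x}=d_m+p+1$, and since $V_x^{m+1}=\iota_0(\widetilde{W^m_x})$ is the image of $\widetilde{W^m_x}$ under the morphism $\iota_0$, its dimension cannot exceed that of the source. Hence $d_{m+1}\le d_m+p+1$.

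The upper bound in (i) is the same argument specialized to $m=0$: since $d_0=\dim\{x\}=0<n$, Lemma~\ref{dim}(iii) yields $\dim\widetilde{W^0_x}=p+1$, and $V_x^1=\iota_0(\widetilde{W^0_x})$ forces $d_1\le p+1$. The substance of (i) is therefore the reverse inequality $d_1\ge p+1$, which amounts to showing that $\iota_0|_{\widetilde{W^0_x}}$ is generically finite onto $V_x^1$; equivalently, that the $p$-dimensional family $\K_x$ of curves through $x$ does not collapse its union into a locus of dimension $\le p$.

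This lower bound is the main obstacle, and I would secure it from the finiteness of the tangent map at a general point. For general $x\in X^0$ the tangent morphism $\tau_x\colon \K_x\to \PP(T_xX)$, assigning to a curve smooth at $x$ its tangent direction there, is finite onto the variety of minimal rational tangents $\C_x$ of dimension $p$ (Kebekus, using freeness on $X^0$ together with Theorem~\ref{ke}). Thus distinct $\K_x$-curves have, up to finite ambiguity, distinct tangent directions at $x$, so they fan out in $p$ independent directions; adjoining the one parameter that moves a point along each curve shows the swept locus $V_x^1$ has dimension at least $p+1$. Equivalently, only finitely many $\K_x$-curves pass through a general point $y\in V_x^1$, so the general fibre of $\iota_0|_{\widetilde{W^0_x}}$ is finite and $d_1=\dim\widetilde{W^0_x}=p+1$. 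Once this generic finiteness is in hand, both statements follow; the remainder is dimension bookkeeping resting on Lemma~\ref{dim}.
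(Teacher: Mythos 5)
Your handling of (ii) and of the upper bound $d_1 \leq p+1$ is correct: both are pure dimension bookkeeping from Lemma~\ref{dim}~(iii) (whose proof is independent of the present proposition, so there is no circularity), and this matches what the paper treats as the trivial part. The genuine divergence is in the lower bound $d_1 \geq p+1$. The paper's own proof is one line: it follows from Mori's Bend-and-Break together with the properness of $\K_x$ (following \cite[Proposition~2.4]{HK}) --- a positive-dimensional family of $\K$-curves through $x$ and a second fixed point would have to degenerate to a reducible or non-reduced cycle, which properness of $\K_x$ forbids; hence the evaluation $\iota_0|_{\widetilde{W^0_x}}$ is generically finite. You instead route through Kebekus' finiteness of the tangent map $\tau_x$ (available via \cite[Theorem~3.4]{Ke2}, cf.\ Theorem~\ref{norm}, and not circular), which is a heavier input than the paper needs but is legitimate.

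However, the key step of your argument is asserted rather than proved, and the most natural attempt to make it precise fails. ``Distinct tangent directions at $x$, plus one parameter along each curve'' does not by itself give $\dim V^1_x \geq p+1$: if you try to realize this by computing the rank of the differential of $\iota_0|_{\widetilde{W^0_x}}$ at points lying over $x$, you get nothing, because $\iota_0$ contracts the entire $p$-dimensional locus $\iota_0^{-1}(x) \subset \widetilde{W^0_x}$ to the point $x$, so its differential there has rank at most $1$ no matter how the tangent directions fan out. Likewise, a positive-dimensional family of curves through $x$ and a second point $y$ is formally compatible with $\tau_x$ being finite (the tangents at $x$ simply vary along the family), so finiteness of $\tau_x$ does not immediately yield finiteness of the general fibre of $\iota_0|_{\widetilde{W^0_x}}$. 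The missing link can be supplied by a tangent-cone (equivalently, blow-up of $x$) argument: every $\K_x$-curve lies in $V:=\overline{V^1_x}$, so the tangent direction at $x$ of every $\K_x$-curve smooth at $x$ lies in the projectivized tangent cone of $V$ at $x$, a projective variety of dimension $\dim V -1$; these directions are dense in $\C_x$, and finiteness of $\tau_x$ gives $\dim \C_x = p$, whence $p \leq d_1 - 1$. With that one step inserted your proof is complete; alternatively, replacing the appeal to $\tau_x$ by the Bend-and-Break/properness argument recovers the paper's proof with more elementary tools.
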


\begin{proof} The first part is derived from Mori's Bend-and-Break and the properness of $\K_x$. Hence the second is trivial.
\end{proof}

\begin{defi}[{\cite[Subsection~2.2]{HK}}]\label{defil} \rm From Lemma~\ref{dim}~$\rm (ii)$, there exists an integer $m>0$ satisfying $d_m=n$ and $d_{m-1}<n$. We denote such $m$ by $l_{\K}$ and call {\it length} with respect to $\K$.
\end{defi}

\begin{rem}\label{remark} \rm 
\begin{enumerate}
\item From Lemma~\ref{dim}~$\rm (ii)$, we have $l_{\K} \leq n$.
\item When $p=0$, we know $l_{\K}=n$ from the above $\rm (i)$ and Proposition~\ref{in}. In this case, it is easy to see that this holds without the assumption of the irreducibility of $\K_x$.
\end{enumerate}
\end{rem}

\section{Main Theorem}

Continuously, we always work under the Assumptions~\ref{NA} and use notation as in the previous section. 

\begin{pro}\label{key} Let $X$ and $\K$ be as in the Notation-Assumptions~\ref{NA}.  If $d_{m+1}=d_m+1$, we have $d_{m+1}=n$.
\end{pro}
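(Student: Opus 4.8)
The plan is to argue by contradiction, reducing to the very ampleness argument already carried out in the proof of Lemma~\ref{dim}~(ii). Assume $d_{m+1}=d_m+1$ and suppose, for contradiction, that $d_{m+1}<n$. Write $d:=d_m$, $Z:=V_x^m$ and $Y:=V_x^{m+1}$ for a general $x\in X^0$. Since $d_{m+1}<n$ forces $d<n$, Lemma~\ref{dim}~(iii) gives $\dim\widetilde{W^m_x}=d+p+1$, whereas by hypothesis $\dim Y=d+1$. As $Y=\iota_0(\widetilde{W^m_x})$ and $\widetilde{W^m_x}$ is irreducible by Lemma~\ref{dim}~(i), the restriction $\iota_0\colon\widetilde{W^m_x}\to Y$ is a dominant morphism of irreducible varieties whose general fibre has dimension exactly $p$.

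The first step is to translate this fibre dimension into a geometric statement. Over a general point $y\in Y$, the fibre $\iota_0^{-1}(y)\cap\widetilde{W^m_x}$ maps finitely, via $\pi_0$, onto the set of curves of $\pi_0(\iota_0^{-1}(Z))$ passing through $y$; hence $\{[C]\in\K_y \mid C\cap\overline{V_x^m}\neq\emptyset\}$ has dimension $p$, and so is dense in $\K_y$. Because $\K_y$ is irreducible of dimension $p$ and the locus of curves meeting the closed set $\overline{V_x^m}$ is closed in $\K_y$, this density upgrades to an equality: every minimal curve through a general $y\in Y$ meets $\overline{V_x^m}$. By genericity such a curve meets $V_x^m\cap X^0$, and is therefore contained in $\overline{V_x^{m+1}}=\overline Y$. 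In particular there is a dense open $Y^{\circ}\subseteq Y$ such that every $\K^0$-curve through a point of $Y^{\circ}$ lies in $\overline Y$; equivalently, every $\K^0$-curve meeting $Y^{\circ}$ is contained in $\overline Y$.

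This last assertion is exactly the input required to rerun the argument of Lemma~\ref{dim}~(ii). Since the construction is uniform in the base point, for general $t$ the analogous open set $Y_t^{\circ}\subseteq V_t^{m+1}$ has the same swallowing property. Let $q=n-(d+1)$ be the codimension of $\overline Y$ and choose a sufficiently general $(q-1)$-dimensional $T\subset X^0$; then the closure $H$ of $\bigcup_{t\in T}Y_t^{\circ}$ is an $(n-1)$-dimensional subvariety of $X$, hence an ample divisor because $\rho(X)=1$. A general member $[C]\in\K^0$ is not contained in $H$; since any $\K^0$-curve meeting some $Y_t^{\circ}$ is absorbed into $\overline{V_t^{m+1}}\subseteq H$, we obtain $C\cap\bigl(\bigcup_{t\in T}Y_t^{\circ}\bigr)=\emptyset$, while the remaining boundary is avoided by a general $C$ exactly as in \cite[II.~Proposition~3.7]{Ko}. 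Thus $C\cap H=\emptyset$, contradicting the ampleness of $H$. This contradiction yields $d_{m+1}=n$.

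The step I expect to be the main obstacle is the second paragraph: promoting the raw fibre-dimension equality into the statement that \emph{every} minimal curve through a general point of $Y$, rather than merely a general such curve, is contained in $\overline Y$. It is here that the irreducibility of $\K_y$ and the properness of the family are indispensable, and it is precisely this upgrade that makes the ampleness argument of Lemma~\ref{dim}~(ii) applicable essentially verbatim.
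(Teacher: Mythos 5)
Your overall strategy is workable and genuinely different in execution from the paper's, but as written it has a real gap, and it sits exactly where you predicted. The invalid inference is: ``every minimal curve through a general $y\in Y$ meets $\overline{V_x^m}$; by genericity such a curve meets $V_x^m\cap X^0$.'' No genericity of $y$ rules out a curve through $y$ that touches $\overline{V_x^m}$ only along the lower-dimensional boundary $\overline{V_x^m}\setminus (V_x^m\cap X^0)$; closedness of the incidence locus gives you ``meets the closure,'' and there is no way back from the closure to the open part. Fortunately the detour is unnecessary: the $p$-dimensional set produced by your fibre-dimension count is precisely $\K_y':=\{[C]\in\K_y \mid C\cap V_x^m\cap X^0\neq\emptyset\}$ (the image under $\pi_0$ of $\iota_0^{-1}(y)\cap\widetilde{W^m_x}$), and every curve in $\K_y'$ is already contained in $V_x^{m+1}$ by the very definition of $V_x^{m+1}$. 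Since $\K_y'$ is dense in the irreducible $p$-dimensional $\K_y$, and $\pi_0$ is open, one has $\pi_0^{-1}(\K_y)\subset\pi_0^{-1}(\overline{\K_y'})\subset\overline{\pi_0^{-1}(\K_y')}$; applying $\iota_0$ shows every curve through $y$ lies in $\overline{V_x^{m+1}}$. Replacing your sentence by this limit argument repairs the swallowing property, after which your rerun of the \cite{KMM1} ampleness argument goes through just as in the proof of Lemma~\ref{dim}~(ii).

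A second point you assert without justification: that $\K_y$ is irreducible of dimension $p$ for general $y\in Y$. This does not follow from Notation-Assumptions~\ref{NA}, which concern general points of $X$, whereas a general point of $Y$ is not general in $X$ when $d_{m+1}<n$. It is true, but only because of the paper's condition (iii) on $X^0$: for \emph{any} $y\in X^0$ the fibre $\iota_0^{-1}(y)$ is irreducible, it has pure dimension $p$ by smoothness of $\iota_0$, and $\pi_0$ maps it finitely onto $\K_y$ (every curve through $y\in X^0$ lies in $\K^0$), so $\K_y=\pi_0(\iota_0^{-1}(y))$ is irreducible of dimension $p$; since $x\in Y\cap X^0$ and $Y$ is irreducible, a general $y\in Y$ does lie in $X^0$. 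You should cite this explicitly --- it is exactly why the paper arranges (iii) before defining the sets $V_x^m$. Finally, for comparison: the paper never leaves the universal family. From the same dimension count ($\dim W_x^{m+1}=d_m+p+1=\dim\widetilde{W^m_x}$) and irreducibility it deduces $W_x^{m+1}\subset\overline{\widetilde{W^m_x}\cap\iota_0^{-1}(X^0)}$, hence $\widetilde{W_x^{m+1}}\subset\overline{\widetilde{W^m_x}}$ using openness of $\pi_0$, hence $d_{m+2}=d_{m+1}$, and then simply cites Lemma~\ref{dim}~(ii) as a black box. That route avoids both of the issues above and does not require running the ampleness argument a second time; your route, once repaired, is a correct but longer alternative that makes the geometry on $X$ more visible.
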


\begin{proof} We have $\widetilde{W^m_x} \cap \iota_0^{-1}(X^0) \subset W_x^{m+1} \subset \widetilde{W^{m+1}_x}$. Furthermore we know $\dim \widetilde{W^m_x}=d_m + p+1$ and $\dim W_x^{m+1}=d_{m+1}+p=d_m+p+1$ from Lemma~\ref{dim} $\rm (iii)$ and our assumption. For a subset $A \subset \U^0$, denote by $\overline{A}$ the closure of $A$ in $\U^0$. 
We see that $\overline{\widetilde{W^m_x} \cap \iota_0^{-1}(X^0)} \cap W_x^{m+1} =W_x^{m+1}$. Hence we have 
\begin{eqnarray*}
\widetilde{W^{m+1}_x} &=& \pi_0^{-1}(\pi_0(W_x^{m+1}))=\pi_0^{-1}(\pi_0((\overline{\widetilde{W^m_x} \cap \iota_0^{-1}(X^0)}) \cap W_x^{m+1})) \\ 
&\subset&  \pi_0^{-1}(\overline{\pi_0(\widetilde{W^m_x} \cap \iota_0^{-1}(X^0))} \cap \pi_0(W_x^{m+1})) \\ 
&=& \pi_0^{-1}(\overline{\pi_0(\widetilde{W^m_x} \cap \iota_0^{-1}(X^0))}) \cap \pi_0^{-1}(\pi_0(W_x^{m+1})) \\ 
&=& \overline{\pi_0^{-1}(\pi_0(\widetilde{W^m_x} \cap \iota_0^{-1}(X^0)))} \cap \widetilde{W^{m+1}_x}.
\end{eqnarray*}
Here the last equality holds because $\pi_0$ is an open morphism. Moreover we see that $\pi_0^{-1}(\pi_0(\widetilde{W^m_x} \cap \iota_0^{-1}(X^0))) \subset \widetilde{W^m_x}$. Therefore we have $\widetilde{W^{m+1}_x} \subset \overline{\widetilde{W^m_x}}$. This implies that $\overline{\widetilde{W^m_x}}=\overline{\widetilde{W^{m+1}_x}}$. Hence we obtain that $\dim \widetilde{W^m_x} = \dim \widetilde{W_x^{m+1}}$. Thus we see $d_{m+1}= d_{m+2}$. As a consequence, we have $d_{m+1}=n$ by Lemma~\ref{dim}~$\rm (ii)$.
\end{proof}

\begin{proof}[Proof of Theorem~\ref{MT}] Obviously, it follows from the definition that $d_{l_{\K}}=n$. Proposition~\ref{in} and \ref{key} imply that 
\begin{eqnarray} 
(p+1)+2(m-1) \leq d_m \leq m(p+1)~{\rm for}~m < l_{\K}. \nonumber
\end{eqnarray} 
When $d_m=(p+1)+2(m-1)$ for any $m<l_{\K}$, we have $n=d_{l_{\K}}=(p+1)+2(l_{\K}-1)$ or $(p+1)+2(l_{\K}-2)+1$. In this case, $l_{\K}=\lfloor \frac{n-p}{2} \rfloor+1$. On the other hand, when $d_m=m(p+1)$ for any $m<l_{\K}$, we have $n=d_{l_{\K}}=(l_{\K}-1)(p+1)+k$ for $1 \leq k \leq p+1$. In this case, $l_{\K}=\lfloor \frac{n-1}{p+1}\rfloor +1$. Hence our assertion holds. 
\end{proof}

\begin{cor}\label{sp} Let $X$ and $\K$ be as in the Notation-Assumptions~\ref{NA}. $l_{\K}$ is equal to $\lfloor \frac{n-1}{p+1} \rfloor+1= \lfloor \frac{n-p}{2}\rfloor +1$ if and only if one of the following holds:\\ 
${\rm (i)}~ n-3 \leq p \leq n-1$, ${\rm (ii)}~ p=1$, or ${\rm (iii)}~ (n,p)=(7,2)$.
\end{cor}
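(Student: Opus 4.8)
The plan is to reduce the statement to elementary arithmetic. By Theorem~\ref{MT} the length satisfies $\lfloor\frac{n-1}{p+1}\rfloor+1\le l_{\K}\le\lfloor\frac{n-p}{2}\rfloor+1$, so $l_{\K}$ can equal the displayed common value exactly when the two bounds agree; if they differ, $l_{\K}$ cannot coincide with both. Hence everything comes down to deciding, for integers $1\le p\le n-1$, when
\[
a:=\Big\lfloor\tfrac{n-1}{p+1}\Big\rfloor=\Big\lfloor\tfrac{n-p}{2}\Big\rfloor=:b ,
\]
where I note that $a\le b$ always holds (this is exactly the content of the two bounds in Theorem~\ref{MT}).

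First I would record the key identity
\[
\frac{n-p}{2}-\frac{n-1}{p+1}=\frac{(p-1)(n-p-2)}{2(p+1)},
\]
obtained by clearing denominators and factoring the numerator as $(p-1)(n-p-2)$. This makes the two boundary families transparent: when $p=1$ the right-hand side vanishes, so $a=b$ for every $n$, yielding case~(ii); and for $n-p\in\{1,2,3\}$ a direct computation gives $a=b$ (one gets $b=0,1,1$ respectively, matched in each case by $a$), yielding case~(i).

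For the remaining range $p\ge 2$ and $n-p\ge 4$ the identity shows the gap $\frac{n-p}{2}-\frac{n-1}{p+1}$ is strictly positive. If this gap is $\ge 1$ then $b\ge a+1$, so $a=b$ is impossible; thus a necessary condition is that the gap be $<1$, i.e.
\[
(p-1)(n-p-2)<2(p+1),\qquad\text{equivalently}\qquad n-p<4+\frac{4}{p-1}.
\]
Combined with $n-p\ge 4$, this bounds $n-p$ for each fixed $p$ (and forces $n=p+4$ once $p\ge 6$), leaving only a short list of candidate pairs. I would then test these candidates one by one; all of them fail except $(n,p)=(7,2)$, which gives case~(iii), while the tail $n=p+4$, $p\ge 6$, is disposed of uniformly since there $a=\lfloor 1+\frac{2}{p+1}\rfloor=1$ whereas $b=2$.

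The one point requiring genuine care is that the inequality gap${}<1$ is only necessary, not sufficient, for the two floors to coincide: for instance $(n,p)=(6,2)$ has gap $\frac13<1$ yet $a=1\ne 2=b$. So the final verification of the finite candidate list cannot be skipped, and it is precisely there that the exceptional pair $(7,2)$ is singled out.
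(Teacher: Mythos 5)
Your proposal is correct and takes essentially the same route as the paper: the ``if'' direction follows by squeezing $l_{\K}$ between the two bounds of Theorem~\ref{MT} once the floors coincide, and the ``only if'' direction is the direct computation that the paper invokes in one line. Your identity $\frac{n-p}{2}-\frac{n-1}{p+1}=\frac{(p-1)(n-p-2)}{2(p+1)}$ simply organizes, and makes finite, the case check that the paper leaves implicit.
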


\begin{proof} The "if" part is derived from Theorem~\ref{MT}. The "only if" part follows from a direct computation. 
\end{proof}

\section{Varieties of minimal rational tangents and their secant varieties}

\subsection{Basic facts of varieties of minimal rational tangents}

Assume that $X$ is a Fano $n$-fold of $\rho=1$ (or more generally, a uniruled manifold) and $\K$ a minimal rational component of $X$ such that $p=\dim \K_x$ for general $x \in X$. It is {\it not} necessary to suppose the Assumption~\ref{NA}. Denote by $\widetilde{\K_x}$ the normalization of $\K_x$. Then it is known that $\widetilde{\K_x}$ is smooth for general $x \in X$ (see \cite[Theorem~1.3]{Hw2}). 
For a general point $x \in X$, we define the tangent map ${\tau}_x : \widetilde{{\K}_x} \rightarrow \PP(T_xX)$ by assigning the tangent vector at $x$ to each member of $\widetilde{\K_x}$ which is smooth at $x$. The regularity of $\tau_x$ follows from \cite[Theorem~3.4]{Ke2}. We denote by $\C_x \subset \PP(T_xX)$ the image of ${\tau}_x$, which is called the {\it variety of minimal rational tangents} at $x$. Let $\PP(W_x)$ be the linear span of $\C_x \subset \PP(T_xX)$ and $W$ the distribution defined by $W_x$ for general $x \in X$ (see \cite[Section~2]{Hw2}). Hwang's survey \cite{Hw2} is a standard reference on varieties of minimal rational tangents.

\begin{them} [{\cite[Theorem~1]{HM2}},{\cite[Theorem~3.4]{Ke2}}]\label{norm} Let $X$ be a Fano manifold (or more generally, a uniruled manifold). Then  the tangent map ${\tau}_x : \widetilde{{\K}_x} \rightarrow \C_x \subset \PP(T_xX)$ is the normalization.
\end{them}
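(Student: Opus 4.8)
The plan is to deduce the statement from two properties of the tangent map together with the universal property of normalization. Recall that by Hwang's smoothness theorem \cite[Theorem~1.3]{Hw2} the variety $\widetilde{\K_x}$ is smooth, hence normal, for general $x$, and that $\tau_x$ is a genuine morphism onto $\C_x$ by \cite[Theorem~3.4]{Ke2} (using Theorem~\ref{ke} to control the finitely many members singular at $x$). Since a finite birational morphism from a normal variety onto a target variety \emph{is} the normalization of that target, it will suffice to show that $\tau_x : \widetilde{\K_x} \rightarrow \C_x$ is finite and birational.

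First I would establish finiteness. If $\tau_x$ were not finite, it would contract a curve, so there would be a positive-dimensional subfamily of minimal rational curves through $x$ all sharing one tangent direction $v \in \PP(T_xX)$ at $x$. Because such curves are pinned by two incidence conditions at $x$ (passing through $x$ and being tangent to $v$ there), one can run Mori's bend-and-break on this family and degenerate it into a reducible rational $1$-cycle through $x$ one of whose components has strictly smaller anticanonical degree, contradicting the minimality of $\K$ among dominating families. This is precisely Kebekus's finiteness assertion in \cite[Theorem~3.4]{Ke2}.

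The harder half will be birationality, i.e. that a general point of $\C_x$ is the tangent direction at $x$ of a \emph{unique} minimal rational curve, so that $\tau_x$ is generically one-to-one. This is the Hwang--Mok rigidity theorem \cite[Theorem~1]{HM2}: the infinitesimal structure of $\C_x$ as a variety of minimal rational tangents prevents two distinct minimal curves from osculating at a general point to the order that a positive mapping degree would require. I expect this to be the main obstacle, since it is the deepest input and, unlike the finiteness step, is not accessible by elementary bend-and-break.

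Granting these two steps, $\tau_x : \widetilde{\K_x} \rightarrow \C_x$ is a finite birational morphism whose source is normal, and the universal property of normalization identifies it with the normalization of $\C_x$, as claimed.
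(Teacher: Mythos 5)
Your proposal is correct and follows exactly the route the paper intends: the paper's ``proof'' of Theorem~\ref{norm} is precisely the citation of finiteness of the tangent map from \cite[Theorem~3.4]{Ke2} and birationality from \cite[Theorem~1]{HM2}, and your gluing step (a finite birational morphism from a normal variety onto $\C_x$ is the normalization, by the universal property) is the standard way these two inputs combine. Your attribution of the labor is also accurate: the finiteness half is the elementary bend-and-break part, while birationality is the deep Hwang--Mok result that cannot be obtained by such elementary means.
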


\begin{them}[\cite{CMSB,Ke1}]\label{CMSB} Let $X$ be a Fano $n$-fold (or more generally, a uniruled $n$-fold). If $p=n-1$, namely $\C_x =\PP(T_xX)$, then $X$ is isomorphic to $\PP^n$.
\end{them}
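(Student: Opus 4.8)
The statement is the Cho--Miyaoka--Shepherd-Barron characterization of $\PP^n$, and the plan is to show that the hypothesis forces the minimal rational curves through a general point to be standard curves of maximal degree, parametrized by a projective space, whose universal family reconstructs $\PP^n$. First I would fix the numerical picture: for general $x$ a general $\K_x$-curve $C$ is free, and the standard deformation count gives $\dim \K_x = -K_X\cdot C - 2$, so the hypothesis $p=n-1$ yields $-K_X\cdot C = n+1$. Writing $f:\PP^1\to C\subset X$ for the normalization, a general minimal rational curve is \emph{standard}, meaning $f^*T_X\cong\O(2)\oplus\O(1)^{\oplus p}\oplus\O^{\oplus(n-1-p)}$; with $p=n-1$ this forces $f^*T_X\cong\O(2)\oplus\O(1)^{\oplus(n-1)}$, with no trivial summands. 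Since $n+1$ is the minimal anticanonical degree in $\K$, Mori's bend-and-break keeps such curves unbreakable: fixing $x$, no $\K_x$-curve can degenerate into a reducible or multiple curve, the members of $\K_x$ are irreducible and reduced, and two of them sharing the tangent direction at $x$ must coincide.

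Next I would identify the parameter space. By Theorem~\ref{norm} the tangent map $\tau_x:\widetilde{\K_x}\to\C_x\subset\PP(T_xX)$ is the normalization; under the hypothesis $\C_x=\PP(T_xX)\cong\PP^{n-1}$ is smooth, hence normal, so $\tau_x$ is an isomorphism and $\widetilde{\K_x}\cong\PP^{n-1}$. Thus for each direction in $\PP(T_xX)$ there is exactly one standard $\K_x$-curve tangent to it, and these curves cover $X$.

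Finally I would reconstruct $\PP^n$. Consider the universal $\PP^1$-bundle $\rho:\U_x\to\widetilde{\K_x}\cong\PP^{n-1}$ and its evaluation $\epsilon:\U_x\to X$, where $\U_x$ is $n$-dimensional, $\epsilon$ is surjective, and $\epsilon$ collapses the section lying over $x$ to the single point $x$. The goal is to prove $\epsilon$ birational, equivalently that a general $y\in X$ lies on a unique $\K_x$-curve; granting this, $\U_x$ is the blow-up of $\PP^n$ at a point, $\epsilon$ is the blow-down, and $X\cong\PP^n$. The main obstacle is exactly this birationality: one must rule out that the $(n-1)$-dimensional family of standard curves through $x$ multiply covers a general point. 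This is where the bend-and-break estimates of Cho--Miyaoka--Shepherd-Barron, or the analysis of the tangent map and of the variety of minimal rational tangents at a second general point in Kebekus' approach, carry the essential weight.
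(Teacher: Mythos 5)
You should first note that the paper offers no proof of this statement to compare with: Theorem~\ref{CMSB} is the Cho--Miyaoka--Shepherd-Barron theorem, quoted with the citations \cite{CMSB}, \cite{Ke1} and used later as a black box (e.g.\ in the proof of Proposition~\ref{p+1}). So the only meaningful question is whether your sketch is itself a proof, and it is not. The sound parts are the preliminaries: the count $p=\dim\K_x=-K_X\cdot C-2$ giving $-K_X\cdot C=n+1$, the standard splitting $f^*T_X\cong\O(2)\oplus\O(1)^{\oplus(n-1)}$ for a general member, the unsplitness of $\K_x$ coming from minimality of the degree, and the deduction $\widetilde{\K_x}\cong\PP^{n-1}$ from Theorem~\ref{norm} (the normalization map onto the smooth, hence normal, variety $\C_x=\PP(T_xX)$ must be an isomorphism).

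The genuine gap is the step you yourself label ``the main obstacle'': that a general $y\in X$ lies on a \emph{unique} $\K_x$-curve, i.e.\ that the evaluation $\epsilon:\U_x\to X$ is birational rather than merely generically finite. This is not a routine verification; it is the entire content of the theorem, and your final sentence explicitly hands it back to the bend-and-break estimates of \cite{CMSB} and the tangent-map analysis of \cite{Ke1} --- the very sources the paper cites for the statement. A proposal that reduces the theorem to its hardest step and then cites the literature for that step is an outline of the known strategy, not a proof. Two further points would also need an argument even if birationality were granted: (a) the distinguished section of $\U_x\to\widetilde{\K_x}$ lying over $x$ exists only if no $\K_x$-curve is singular at $x$; Theorem~\ref{ke} gives \emph{finiteness} of such curves, not their absence, and excluding them is a real step in Kebekus' argument; (b) the conclusion that $\U_x$ is the blow-up of $\PP^n$ at a point, and hence $X\cong\PP^n$, needs the computation of the normal bundle of that section (it is $\O_{\PP^{n-1}}(-1)$, using standardness) together with an application of Zariski's main theorem to the induced finite birational morphism $\PP^n\to X$. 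Those last two items are standard; the uniqueness statement in the middle is not, and without it there is no proof.
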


\begin{them}[\cite{HH}]\label{HHM} Let $X$ be a Fano $n$-fold of $\rho=1$. Let $S=G/P$ be a rational homogeneous variety corresponding to a long simple root and $\C_o \subset \PP(T_oS)$ the variety of minimal rational tangents at a reference point $o \in S$. Assume $\C_o \subset \PP(T_oS)$ and $\C_x \subset \PP(T_xX)$ are isomorphic as projective subvarieties. Then $X$ is isomorphic to $S$.
\end{them}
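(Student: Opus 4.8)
Following the approach of \cite{HH}, I would not try to reconstruct $X$ by hand, but instead exploit the rigidity of the geometric structure carved out on $X$ by the variety of minimal rational tangents, reducing the global statement to a local comparison that is afterwards integrated. Concretely, the hypothesis supplies, for a general point $x$, a projective isomorphism $\C_x \cong \C_o$. The first step is to promote this pointwise datum to an isomorphism of the \emph{cone structures} (equivalently, the filtered differential systems, or $G$-structures) that $\C$ and $\C_o$ determine on open subsets of $X$ and of $S=G/P$. Here one uses Theorem~\ref{norm} to identify $\widetilde{\K_x}$ with the normalization of $\C_x$ and to control the family of minimal rational curves, so that the VMRT-preserving identification varies holomorphically with $x$.

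The engine of the argument is the Cartan--Fubini type extension theorem of Hwang and Mok for Fano manifolds of Picard number $1$: when the VMRT is not too degenerate, any biholomorphism between connected Euclidean-open subsets that carries VMRT to VMRT extends to a global biregular map. Thus it suffices to produce a single local VMRT-preserving biholomorphism between an open subset of $X$ and an open subset of $S$, and the hypothesis $\rho=1$ is precisely what makes this local-to-global passage available.

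The heart of the matter, and the step I expect to be the main obstacle, is establishing local flatness: that the VMRT-structure on $X$ is locally isomorphic to the homogeneous model on $S$. This is a problem in the deformation theory of geometric structures modelled on a simple graded Lie algebra $\mathfrak{g}=\bigoplus \mathfrak{g}_i$. The subvariety $\C_o$ recovers the negative (symbol) part of this grading, and the obstruction to flatness is governed by the higher \emph{prolongations} of the symbol together with the relevant Lie-algebra cohomology. The decisive input, and the reason for assuming that $S$ corresponds to a \emph{long} simple root, is that for these models the pertinent prolongation vanishes, so the symbol is its own prolongation; by the rigidity theory of such structures (in the spirit of Yamaguchi and of Hwang--Mok) this vanishing forces any structure with the prescribed VMRT to be locally flat. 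For short roots the prolongation can be nonzero, which is exactly where the recognition may fail and extra hypotheses become necessary.

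Assembling the pieces: local flatness yields a local VMRT-preserving biholomorphism onto an open subset of $S$, and the Cartan--Fubini extension globalizes it to a biregular map $X \to S$, giving $X \cong S$. I would expect the genuinely difficult work to lie entirely in the prolongation and cohomology vanishing, with the differential-geometric comparison and the extension theorem serving as (by now standard) machinery.
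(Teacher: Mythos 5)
The paper gives no proof of Theorem~\ref{HHM}: it is quoted directly from Hong--Hwang \cite{HH}, so there is no internal argument to compare against. Your sketch---promoting the pointwise isomorphism $\C_x \cong \C_o$ to an equivalence of cone structures, establishing local flatness via Tanaka-type prolongation and cohomology vanishing (which is precisely where the long-root hypothesis enters), and then globalizing with the Hwang--Mok Cartan--Fubini extension theorem under $\rho=1$---is an accurate outline of the strategy actually employed in \cite{HH}, so your proposal matches the source's approach.
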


\begin{cor}\label{LG} Let $X$ be a Fano $n$-fold of $\rho=1$. If $\C_x \subset \PP(T_xX)$ is projectively equivalent to the Veronese surface $v_2(\PP^2) \subset \PP^5$, $X$ is the $6$-dimensional Lagrangian Grassmann $LG(3,6)$ which parametrizes $3$-dimensional isotropic subspaces of a symplectic vector space $\CC^6$.
\end{cor}

\begin{proof} $LG(3,6)$ is the rational homogeneous variety corresponding to the unique long simple root of the Dynkin diagram $C_3$. Furthermore the variety of minimal rational tangents of $LG(3,6)$ at a general point is projectively equivalent to $v_2(\PP^2)$ (for example, see \cite[Proposition~1]{HM}). Hence Theorem~\ref{HHM} implies that $X$ is isomorphic to $LG(3,6)$.

\end{proof}

\begin{them}[\cite{Mi}]\label{Mi} Let $X$ be a Fano $n$-fold of $\rho=1$. If $n \geq 3$, the following are equivalent.
\begin{enumerate}
\item{$X$ is isomorphic to a smooth quadric hypersurface $Q^n$.}
\item{The minimal value of the anticanonical degree of rational curves passing through a general point $x_0 \in X$ is equal to $n$.}
\end{enumerate}
\end{them}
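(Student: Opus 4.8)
The plan is to prove the two implications separately; the forward implication $(i)\Rightarrow(ii)$ is a direct computation, while $(ii)\Rightarrow(i)$ carries all the difficulty. For $(i)\Rightarrow(ii)$ I would use adjunction on $Q^n\subset\PP^{n+1}$: one has $-K_{Q^n}=\O_{Q^n}(n)$, so every rational curve $C$ satisfies $-K_{Q^n}\cdot C = n\,(\O_{Q^n}(1)\cdot C)\geq n$, with equality exactly for the lines. Since $Q^n$ is covered by lines, there is such a curve through a general point, and the minimal anticanonical degree through a general point is exactly $n$.

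For $(ii)\Rightarrow(i)$, let $\K$ be a minimal rational component and $C$ a general, hence free, member with $-K_X\cdot C=n$. The first step is to convert the degree hypothesis into information about the variety of minimal rational tangents. A general minimal rational curve has split tangent bundle $f^*T_X\cong\O(2)\oplus\O(1)^{\oplus(n-2)}\oplus\O$: the number of positive summands equals $-K_X\cdot C-1=n-1$, leaving exactly one trivial factor, so that $p=\dim\K_x=-K_X\cdot C-2=n-2$. Consequently $\C_x\subset\PP(T_xX)\cong\PP^{n-1}$ is a hypersurface; in fact hypothesis $(ii)$ is equivalent to $\C_x$ being a hypersurface.

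The heart of the proof is then to show that this hypersurface is a smooth quadric $Q^{n-2}\subset\PP^{n-1}$. I would analyze the projective second fundamental form of $\C_x$ together with the prolongation of the distribution spanned by $\C_x$: the single trivial summand $\O$ in the splitting type is precisely the obstruction to $\C_x$ filling all of $\PP^{n-1}$ (the equality case yielding $\PP^n$ by Theorem~\ref{CMSB}), and the aim is to prove that this obstruction is quadratic, so that $\C_x$ is cut out by one quadric and is smooth. Granting this, $\C_x$ is projectively equivalent to the standard smooth quadric hypersurface, which is exactly the variety of minimal rational tangents of $Q^n=SO(n+2)/P_1$; since $Q^n$ is the rational homogeneous variety attached to a long simple root, the recognition Theorem~\ref{HHM} gives $X\cong Q^n$.

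The main obstacle is this middle identification of $\C_x$ as a smooth quadric: the numerical data fix only its dimension, not its degree or its smoothness, so the minimality of $\K$ and the positivity of $T_X$ along chains of minimal rational curves must be exploited in earnest to bound the degree and exclude singularities. An alternative route is to argue that the minimal curves are lines for the ample generator $H$, i.e. $H\cdot C=1$, whence $-K_X=\O_X(n)$ and the conclusion follows from the Kobayashi--Ochiai characterization of Fano manifolds of index $n$; but ruling out $H\cdot C\geq 2$ is itself delicate and amounts to essentially the same analysis of $\C_x$.
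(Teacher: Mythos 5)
The paper itself offers no proof of this statement: it is quoted verbatim from Miyaoka's paper \cite{Mi} (and is then used as a black box to derive Corollary~\ref{Mi2}). So your proposal has to stand on its own as a complete argument, and it does not. The direction (i)$\Rightarrow$(ii) via adjunction and lines on $Q^n$ is fine, and your reduction of (ii)$\Rightarrow$(i) to the statement ``$\C_x\subset\PP(T_xX)$ is a hypersurface'' is also correct: for a minimal dominating family one has $p=\dim\K_x=-K_X\cdot C-2=n-2$, and finiteness of the tangent map (Kebekus, Theorem~\ref{ke}/\ref{norm}) makes $\C_x$ of dimension $n-2$. But the step you yourself call ``the heart of the proof'' --- that this hypersurface is a \emph{smooth quadric} --- is never argued; you name tools (projective second fundamental form, prolongations) and state the ``aim,'' which is exactly the hard content of Miyaoka's theorem. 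Nothing in the numerical data bounds the degree of the hypersurface $\C_x$ by $2$, and nothing excludes singularities: note that the present paper's own Propositions~\ref{p+1} and \ref{p+2} must explicitly allow degenerate \emph{singular} VMRTs as uneliminated cases, and Remark 5.7 records that smoothness of $\C_x$ is in general only conjectural. So the recognition theorem of Hong--Hwang (Theorem~\ref{HHM}) cannot be invoked, and the proof collapses at its central point.

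Your fallback route --- prove $H\cdot C=1$ so that $X$ has index $n$ and conclude by Kobayashi--Ochiai --- is the more standard way such statements are proved, but you concede that ruling out $H\cdot C\geq 2$ is ``delicate and amounts to essentially the same analysis,'' i.e.\ it is also not done. In sum: both implications are correctly set up, the easy one is proved, but for the substantive implication the proposal reduces the theorem to an unproved claim of essentially the same depth as the theorem itself. If you want a genuine proof you must either carry out the identification of $\C_x$ (degree bound \emph{and} smoothness), or follow Miyaoka's actual argument in \cite{Mi}, which does not pass through the VMRT-recognition machinery.
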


\begin{cor}\label{Mi2} Let $X$ be a Fano $n$-fold of $\rho=1$. If $p=n-2 \geq 1$, namely $\C_x \subset \PP(T_xX)$ is a hypersurface, $X$ is isomorphic to $Q^n$.
\end{cor}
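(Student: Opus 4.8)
The plan is to reduce the statement to Miyaoka's characterization of the quadric, Theorem~\ref{Mi}. Since the hypothesis $p=n-2\geq 1$ forces $n\geq 3$, the hypothesis of Theorem~\ref{Mi} is in force, so it suffices to prove that the minimal value of the anticanonical degree of a rational curve passing through a general point of $X$ is equal to $n$. Everything thus reduces to a degree computation for the minimal rational component $\K$, followed by a minimality argument. (As in Subsection~4.1, I do not need to impose the irreducibility hypothesis of Notation-Assumptions~\ref{NA}; only $p=\dim\K_x$ is used.)

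First I would compute the anticanonical degree $-K_X\cdot C$ of a general $\K$-curve $C$ in terms of $p$. Using the universal morphisms $\pi:\U\rightarrow\K$ and $\iota:\U\rightarrow X$, the map $\pi$ is a $\PP^1$-bundle, so $\dim\U=\dim\K+1$; since $\iota$ is dominant, a general fibre $\iota^{-1}(x)$ has dimension $\dim\K+1-n$. Because a general $\K$-curve is smooth at $x$ and meets $x$ exactly once, $\pi$ restricts to a generically injective map $\iota^{-1}(x)\rightarrow\K_x$, whence $p=\dim\K_x=\dim\K+1-n$. Combining this with the deformation-theoretic identity $\dim\K=(-K_X\cdot C)+n-3$ for the free curves of the minimal family (which holds since $f^*T_X$ is semipositive, so $h^1=0$ and $h^0(f^*T_X)=(-K_X\cdot C)+n$) yields $-K_X\cdot C=p+2$. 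Substituting $p=n-2$ gives $-K_X\cdot C=n$; in particular a rational curve of anticanonical degree $n$ passes through a general point, so the minimal degree there is at most $n$.

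It then remains to show that no rational curve of smaller anticanonical degree passes through a general point. I would argue via the following dichotomy for a rational curve $C'$ through a general $x$. If $C'$ is free, then it deforms in a family dominating $X$ (as $\iota^*T_X$ is globally generated on a free curve), producing a dominating family of anticanonical degree below that of $\K$, contradicting the minimality of $\K$. If instead $C'$ is not free, I would invoke boundedness: for each fixed degree $e<n$ the non-free rational curves of degree $e$ sweep out only a proper closed subset of $X$, since they cannot dominate and there are finitely many relevant components; the union over the finitely many values $e<n$ is again a proper closed subset, which a general $x$ avoids. Hence the minimal anticanonical degree of a rational curve through a general point of $X$ is exactly $n$, and Theorem~\ref{Mi} gives $X\cong Q^n$.

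I expect the degree computation $-K_X\cdot C=p+2$ to be routine; the genuinely delicate step is the last one, namely ruling out rational curves of degree below $n$ through a general point. This is where the minimality of $\K$ and the distinction between free and non-free curves must be combined, and where one relies on the boundedness of rational curves of bounded anticanonical degree on the Fano manifold $X$.
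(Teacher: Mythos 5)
Your proposal is correct and takes essentially the same route as the paper: both compute that the minimal rational curves have anticanonical degree $p+2=n$ (so $X$ is covered by curves of degree $n$), rule out rational curves of degree $<n$ through a general point via the minimality of $\K$ combined with the finiteness of families of bounded degree, and then invoke Theorem~\ref{Mi}. Your free/non-free dichotomy merely spells out the details behind the paper's compressed statement that ``finitely many families of rational curves of anticanonical degree $<n$ cannot be dominating.''
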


\begin{proof} From our assumption $p=n-2$, $X$ is covered by rational curves of anticanonical degree $\leq n$. Since finitely many families of rational curves of anticanonical degree $<n$ cannot be dominating under the assumption $p=n-2$. Therefore $X$ is isomorphic to $Q^n$ by Theorem~\ref{Mi}.
\end{proof}

\begin{pro}[{\cite[Proposition~5]{Hw3}}]\label{lin} Let $X$ be a Fano $n$-fold of $\rho=1$. Then $\C_x \subset \PP(T_xX)$ cannot be a linear subspace except $\C_x = \PP(T_xX)$. 
\end{pro}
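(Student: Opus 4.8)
The plan is to argue by contradiction. Suppose that for general $x$ the variety of minimal rational tangents is a \emph{proper} linear subspace, say $\C_x = \PP(W_x)$ with $p = \dim \C_x \le n-2$, where $W_x \subset T_xX$ is a linear subspace of dimension $p+1$. I would then produce a contradiction with the fact that a Fano manifold of $\rho=1$ is chain-connected by its minimal rational curves. Combined with Theorem~\ref{CMSB}, which handles the case $\C_x = \PP(T_xX)$ (forcing $X \cong \PP^n$), this shows that the only linear possibility is $\C_x = \PP(T_xX)$, which is the assertion.

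First I would relate $W_x$ to the splitting type along a curve. For a general $\K$-curve $C$ through $x$ with normalization $f:\PP^1 \to X$, $f(0)=x$, the bundle $f^*T_X$ splits as $\O(2)\oplus \O(1)^{\oplus p}\oplus \O^{\oplus(n-1-p)}$, the number of $\O(1)$-factors being $p=\dim \K_x = \dim \C_x$. By the Hwang--Mok description of the tangent map (compatible with Theorem~\ref{norm}), the fibre at $0$ of the positive part $P:=\O(2)\oplus \O(1)^{\oplus p}$ is the affine tangent space of $\C_x$ at the point $[T_xC]$. Since $\C_x=\PP(W_x)$ is linear, this affine tangent space is independent of $[T_xC]$ and equals $W_x$; hence $W|_C = P$ as a subbundle of $f^*T_X$. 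Spreading this over the dominating family shows that $W$ is a subbundle of $T_X$ of rank $p+1$ on a dense open subset $X^0$.

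Next I would prove that $W$ is integrable. The Frobenius obstruction is the $\O_X$-linear tensor $\omega:\Lambda^2 W \to T_X/W$. Restricting the global section $\omega$ to a general curve $C$ turns it into a section of $\Lambda^2(W|_C)^{\vee}\otimes (T_X/W)|_C$; with $W|_C=\O(2)\oplus\O(1)^{\oplus p}$ and $(T_X/W)|_C=\O^{\oplus(n-1-p)}$ this bundle is a direct sum of copies of $\O(-3)$ and $\O(-2)$, so it has no nonzero global section and $\omega|_C=0$. In particular $\omega_x=0$ for general $x$; since $\omega$ is a tensor this forces $\omega\equiv 0$, so $W$ defines a foliation on $X^0$. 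Finally, every $\K$-curve meeting $X^0$ has tangent directions in $\C_x\subset\PP(W_x)$, hence is tangent to $W$ and lies in a single leaf. Thus all curves through general $x$ lie in the leaf through $x$, of dimension $p+1$, and inductively $V_x^m$ is contained in that leaf, giving $\dim V_x^m \le p+1 < n$ for every $m$. This contradicts the fact, underlying the very definition of $l_{\K}$ (cf.\ Lemma~\ref{dim}~$\rm(ii)$), that chains of minimal rational curves eventually connect general points of a Fano manifold of Picard number $1$, so that $\dim V_x^m = n$ for $m\gg 0$.

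The genuinely delicate step is the identification $W|_C = P$ in the second paragraph: it rests on the Hwang--Mok analysis of how the positive part of $f^*T_X$ encodes the embedded tangent spaces of $\C_x$, and the linearity of $\C_x$ is exactly what makes these tangent spaces constant and equal to $W_x$. A secondary technical point is that the leaves of the foliation are only immersed submanifolds, but this is harmless since I use only the dimension bound $\dim V_x^m \le \operatorname{rank} W = p+1$, which is robust; one should also check that $W$ is a genuine subbundle after shrinking $X^0$, which follows from the generic constancy of the splitting type.
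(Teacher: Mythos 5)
The paper offers no proof of Proposition~\ref{lin}: it is quoted verbatim from \cite[Proposition~5]{Hw3}. Your argument is therefore not competing with a proof in the text, and it is, in substance, a correct reconstruction of the standard argument behind Hwang's proposition: linearity of $\C_x$ forces the positive part $\O(2)\oplus\O(1)^{\oplus p}$ of $f^*T_X$ to coincide with $W$ along a general curve, the Frobenius tensor then dies on $\K$-curves for degree reasons, so $W$ is integrable, and integrability of a proper $W$ is incompatible with $\rho=1$. Note, however, that within this paper the same conclusion follows in two lines from results it already quotes: a linear $\C_x=\PP(W_x)$ is smooth and irreducible with $\dim W_x=p+1<2(p+1)$, so $W$ is integrable by Proposition~\ref{mok}, whence $W_x=T_xX$ by Proposition~\ref{hwlem2} and $\C_x=\PP(T_xX)$; your proof essentially re-derives these two propositions in the linear case (and Theorem~\ref{CMSB} is not needed, since the statement permits the exceptional case without identifying $X$ with $\PP^n$). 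Two steps of yours need the standard genericity bookkeeping to be airtight. First, the identification $W|_C=P$ must hold at \emph{all} general points of $C$, and a curve general among those through $x$ need not be general among those through its other points; one should instead take $[C]$ general in $\K$, so that the Hwang--Mok tangent-space identification applies along a dense open subset of $C$, and then use that two saturated subsheaves agreeing generically agree. Second, in the chain induction the sets $V_x^m$ may meet points outside the locus where the foliation is defined, and curves attached there are a priori uncontrolled; this is repaired exactly as the paper does for its condition (iii), by shrinking $X^0$ into the foliated locus before defining the $V_x^m$ and using their irreducibility (Lemma~\ref{dim}) --- or avoided altogether by citing Proposition~\ref{hwlem2} in place of the leaf argument.
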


\begin{pro}[{\cite[Proposition~16]{A2}}]\label{ara} Let $X$ be a Fano manifold (or more generally, a uniruled manifold). If $\C_x \subset \PP(W_x)$ is an irreducible hypersurface for general $x \in X$, $W \subset T_X$ is integrable.
\end{pro}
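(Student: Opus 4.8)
The plan is to prove that $W$ is integrable by showing that its Frobenius form vanishes on a dense open subset. Let $\Phi\colon\wedge^2 W\to T_X/W$ be the $\O_X$-linear bundle map induced by the Lie bracket, $\Phi(u,v)=[u,v]\bmod W$; since $\Phi$ is tensorial (the correction term in $[fu,v]$ lies in $W$), the distribution $W$ is integrable exactly when $\Phi\equiv 0$. Fix a general $x$ and let $F$ be an irreducible homogeneous polynomial of degree $d$ on $W_x$ cutting out the affine cone $\widehat{\C_x}$ over the hypersurface $\C_x\subset\PP(W_x)$; by Proposition~\ref{lin}, $\C_x$ is not linear, so $d\ge 2$. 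The one deep ingredient I would import is the foundational fact from the theory of minimal rational tangents (Hwang--Mok; see \cite{Hw2}) that, for general $x$ and a general smooth point $\alpha\in\widehat{\C_x}$, the linear map $\Phi(\alpha,\cdot)\colon W_x\to(T_X/W)_x$ annihilates the affine tangent space $\widehat{T}_\alpha\widehat{\C_x}$. This is the crux; all that follows is multilinear algebra together with one classical fact about hypersurfaces.

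First I would reduce to a rank-one statement. Because $\C_x$ is a hypersurface, $\widehat{T}_\alpha\widehat{\C_x}=\ker dF_\alpha$ is a hyperplane, so the imported vanishing says $\Phi(\alpha,\cdot)$ kills a hyperplane and hence has rank at most one: $\Phi(\alpha,v)=dF_\alpha(v)\,\eta(\alpha)$ for a unique $\eta(\alpha)\in(T_X/W)_x$. If $\eta$ vanishes on $\widehat{\C_x}$ then $\Phi$ kills $\widehat{\C_x}\times W_x$, and since $\widehat{\C_x}$ spans $W_x$ this already gives $\Phi\equiv 0$; so assume $\eta\not\equiv 0$. Skew-symmetry of $\Phi$ yields, for $\alpha,\beta\in\widehat{\C_x}$,
\[
dF_\alpha(\beta)\,\eta(\alpha)+dF_\beta(\alpha)\,\eta(\beta)=0.
\]
If some pair had $\eta(\alpha),\eta(\beta)$ linearly independent, then $dF_\alpha$ would vanish on a nonempty open subset of $\widehat{\C_x}$, forcing $\widehat{\C_x}$ into a hyperplane and contradicting that it spans $W_x$. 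Hence all $\eta(\alpha)$ lie on one line $\CC\xi$, so $\Phi$ takes values in $\CC\xi$ and $\Phi=\omega\otimes\xi$ for a scalar skew form $\omega\in\wedge^2 W_x^{\ast}$ with $\omega(\alpha,\cdot)=c(\alpha)\,dF_\alpha$ on the smooth locus of $\widehat{\C_x}$.

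Next I would write $\omega(u,v)=\langle Mu,v\rangle$ with $M\colon W_x\to W_x^{\ast}$ skew, so that the previous identity becomes $M\alpha=c(\alpha)\,\nabla F(\alpha)$ on $V(F)=\widehat{\C_x}$. Eliminating $c$, the $1$-form $\sigma:=\sum_i(M\alpha)_i\,dx_i$ satisfies $\sigma\wedge dF=F\,\Omega$ for a constant $2$-form $\Omega$ (by a degree count, using that $F$ is irreducible). Wedging with $dF$ gives $F\,(\Omega\wedge dF)=0$, hence $\Omega\wedge dF=0$ since the coefficient ring is a domain; differentiating $\sigma\wedge dF=F\Omega$ and using $d\sigma=-2\Omega_M$, where $\Omega_M=\sum_{i<j}M_{ij}\,dx_i\wedge dx_j$ represents $M$, I get $\Omega_M\wedge dF=0$ as well. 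I would then finish by a rank analysis of $\Omega_M$ (whose rank is that of $M$): if $\operatorname{rank}\Omega_M\ge 4$, the map $\theta\mapsto\Omega_M\wedge\theta$ on $1$-forms is injective, forcing $dF=0$, absurd; if $\operatorname{rank}\Omega_M=2$, then $\Omega_M=\theta_1\wedge\theta_2$ is decomposable and $\Omega_M\wedge dF=0$ forces $dF$ into $\langle\theta_1,\theta_2\rangle$, so $F$ is a homogeneous polynomial in two linear forms, hence a product of linear factors, contradicting irreducibility and $d\ge 2$. The only possibility is $\operatorname{rank}\Omega_M=0$, i.e.\ $M=0$, whence $\omega=0$ and $\Phi=0$; thus $W$ is integrable.

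The hard part is entirely the imported Hwang--Mok input that the Frobenius form annihilates the tangent directions of the variety of minimal rational tangents: it is the only genuinely differential-geometric step, and it is where the positivity of minimal rational curves enters. After that, the sole delicate point is upgrading the pointwise rank-one bound on $\Phi(\alpha,\cdot)$ to the global vanishing of $\omega$, and this is precisely where the hypersurface hypothesis and Proposition~\ref{lin} do the work.
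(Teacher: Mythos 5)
Your proposal is correct, but the comparison here is unusual: the paper does not prove this statement at all --- it is quoted verbatim from Araujo \cite[Proposition~16]{A2}, so there is no in-paper argument to match. What you have written is a genuine self-contained proof, and its structure is sound. Your one imported ingredient --- that for general $x$ and generic smooth $\alpha\in\widehat{\C_x}$ the Frobenius tensor $\Phi(\alpha,\cdot)$ annihilates $\widehat{T}_\alpha\widehat{\C_x}$ --- is exactly the Hwang--Mok bracket-vanishing result (\cite[Proposition~2.4]{Hw2}, in fact stated there for all of $\wedge^2\widehat{T}_\alpha\widehat{\C_x}$), which is the same ingredient underlying the paper's Proposition~\ref{mok} and Araujo's own argument; crucially it only requires genericity of $\alpha$, not smoothness of $\C_x$, which is what makes the singular case go through. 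Where your route genuinely differs from the cited one is in how the pointwise rank-one information is globalized: Araujo's treatment stays projective-geometric (the degenerate configurations are excluded via the Gauss map and the cone lemma, i.e.\ Lemma~\ref{arau}), whereas you convert everything into the polynomial identity $\sigma\wedge dF=F\,\Omega$ and kill the skew form by the rank analysis of the constant $2$-form $\Omega_M$. Your version is more computational but also more elementary: it needs no facts about cones or dual varieties, only that $F$ is irreducible of degree $\geq 2$ and that homogeneous binary forms split into linear factors.

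Two small repairs you should make, neither of which affects the logic. First, you invoke Proposition~\ref{lin} to get $\deg F\geq 2$, but that proposition is stated for Fano manifolds of Picard number $1$ and concerns linearity in $\PP(T_xX)$, while the present statement is for arbitrary uniruled manifolds and concerns $\PP(W_x)$; you do not need it --- since $\PP(W_x)$ is by definition the linear span of $\C_x$, a degree-one hypersurface (a hyperplane) could never span $\PP(W_x)$, so $d\geq 2$ is automatic. Second, the identities $\Phi(\alpha,\cdot)=dF_\alpha(\cdot)\,\eta(\alpha)$ and $\omega(\alpha,\cdot)=c(\alpha)\,dF_\alpha$ hold a priori only on a dense open subset of the smooth locus of $\widehat{\C_x}$ (where the Hwang--Mok vanishing applies and $\eta$ is regular); to deduce $\sigma\wedge dF=F\,\Omega$ you should note that the proportionality $M\alpha\parallel\nabla F(\alpha)$ is expressed by the vanishing of $2\times 2$ minors, a closed condition, hence extends from that dense subset to all of $V(F)$, after which the divisibility-by-$F$ argument runs as you wrote it.
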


\begin{pro}[{\cite[Proposition 2]{Hw1}}]\label{hwlem2} Let $X$ be a Fano $n$-fold of $\rho=1$. $W$ is integrable if and only if $W_x$ coincides with $T_xX$ for general $x \in X$.  
\end{pro}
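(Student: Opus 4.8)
The plan is to treat the two implications separately, since only one is substantial. The "if" direction is immediate: if $W_x = T_xX$ for general $x$, then $W$ agrees with $T_X$ as a subsheaf on a dense open set, and $T_X$ is trivially closed under the Lie bracket, so $W$ is integrable. I would therefore devote the work to the converse, assuming $W$ integrable and proving $\dim W_x = n$ for general $x$.

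First I would record that minimal rational curves are tangent to $W$. By definition $\PP(W_x)$ is the linear span of the variety of minimal rational tangents $\C_x$, so $\C_x \subseteq \PP(W_x)$; hence the tangent direction at a general point $z$ of a general $\K^0$-curve $C$ (where $z$ is a general point of $X$ and $[C]\in\K_z$ is general) lies in $W_z$. In other words $df(T_{\PP^1}) \subseteq f^* W$, so $C$ is an integral curve of the distribution $W$.

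The heart of the argument is to combine this with the chain construction of Section~\ref{Dl}. On the open locus of $X^0$ where $W$ is a subbundle, integrability and the Frobenius theorem produce a foliation whose leaves are the maximal integral submanifolds, of dimension $r := \operatorname{rank} W$. Since every $\K^0$-curve is an integral curve of $W$, such a curve through a point $y$ is contained in the leaf through $y$. I would then show by induction that the whole chain locus $V_x^m$ lies in the single leaf $L_x$ through $x$: indeed $V_x^1$ is swept out by integral curves through $x$, all lying in $L_x$, and $V_x^{m+1}$ is swept out by integral curves through points of $V_x^m \subseteq L_x$, each again contained in $L_x$. Consequently, at a general smooth point $v$ of $V_x^m$ one has $T_v V_x^m \subseteq T_v L_x = W_v$, so $d_m = \dim V_x^m \le r$. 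Taking $m = l_{\K}$ and using $d_{l_{\K}} = n$ (Definition~\ref{defil}), I obtain $n \le r \le n$, hence $W_x = T_xX$ for general $x$.

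The main obstacle is the bookkeeping around the loci where $W$ fails to be a subbundle and where the chain members meet the boundary $X \setminus X^0$, so that the clean Frobenius picture applies; this is exactly where shrinking to $X^0$ and using that $\K^0$-curves are free is needed. I note that no algebraicity of the leaves themselves is required: although a leaf of $W$ could a priori be Zariski-dense of dimension $r < n$, the sets $V_x^m$ are genuine algebraic subvarieties coming from the morphisms $\iota_0, \pi_0$, so it suffices to bound their dimension infinitesimally through the tangent-space containment $T V_x^m \subseteq W$, which is precisely what bracket-closedness of $W$ guarantees. Finally, the hypothesis $\rho = 1$ enters only through the finiteness statement $d_{l_{\K}} = n$ of Lemma~\ref{dim}~(ii), which is what lets the chain fill up $X$.
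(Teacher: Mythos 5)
The paper does not actually prove this statement: it is quoted verbatim from Hwang \cite[Proposition~2]{Hw1}, so the only meaningful comparison is with that original argument. Your reconstruction follows essentially the same mechanism as the known proof: minimal rational curves are tangent to $W$ because $\C_x$ spans $\PP(W_x)$; integrability gives Frobenius leaves; chains of minimal rational curves cannot escape a leaf; and chain-connectedness under $\rho=1$ (the Koll\'ar--Miyaoka--Mori argument, encoded here in Lemma~\ref{dim}~(ii) and Definition~\ref{defil}) forces the leaf through a general point to have dimension $n$, whence $W_x=T_xX$. So the approach is the right one, and the overall logic is sound.

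Two places deserve more care than your ``bookkeeping'' remark supplies. First, the inductive step ``each curve through a point of $V_x^m\subseteq L_x$ is again contained in $L_x$'' cannot be taken literally: the foliation exists only on an open subset $U$ where $W$ is a subbundle and the VMRT is defined and spanning, and a compact $\K$-curve always leaves $U$, so it is never contained in a leaf. What saves the induction is neither shrinking $X^0$ nor freeness (the fixes you name), but the elementary fact that $C\cap U$ is \emph{connected} --- an irreducible curve minus a finite set --- combined with the fact that at \emph{every} point $y\in U$, \emph{every} $\K$-curve through $y$ has its branch tangent directions in $\C_y\subseteq\PP(W_y)$, not just the general curve through the general point. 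Then $C\cap U$ is a connected integral curve of the foliation, hence lies in the single leaf through its meeting point $y$, and the correct inductive statement $V_x^{m+1}\cap U\subseteq L_x$ follows; this suffices for your dimension count because a general smooth point of $V_x^m$ lies in $U$. Second, your proof invokes the Section~2 machinery, which the paper develops only under the Notation-Assumptions~\ref{NA} ($\K_x$ irreducible, $p>0$), whereas the proposition is stated in Section~4 explicitly without those assumptions; to get the full statement one should instead cite the chain-connectedness result of Koll\'ar--Miyaoka--Mori directly, whose proof (reproduced inside Lemma~\ref{dim}~(ii)) needs neither irreducibility of $\K_x$ nor $p>0$. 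Since the paper only ever applies the proposition under \ref{NA}, this restriction is harmless in context, but it should be acknowledged.
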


\begin{pro}[cf. {\cite[Proposition~2.4 and 2.6]{Hw2}}]\label{mok} Let $X$ be a Fano $n$-fold of $\rho=1$. Assume that $\C_x$ is smooth and irreducible. If $2(p+1)> \dim W_x$ holds, $W \subset T_X$ is integrable.
\end{pro}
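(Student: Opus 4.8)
The plan is to prove directly that the Frobenius obstruction of $W$ vanishes; once this is done integrability follows (and one could also read off $W_x=T_xX$ via Proposition~\ref{hwlem2}). Recall that $W\subset T_X$ is integrable if and only if the O'Neill--Frobenius tensor $\phi\colon\wedge^2 W\to T_X/W$, given by $\phi(u,v)=[\,\tilde u,\tilde v\,]\bmod W$ for local lifts $\tilde u,\tilde v$ of sections of $W$, vanishes identically. Since $\phi$ takes values in $T_X/W$, I would fix a general point $x$ and a linear functional $\lambda$ on $T_xX/W_x$ and reduce to showing that the genuine skew--symmetric form $\psi:=\lambda\circ\phi$ on $W_x$ is zero; letting $\lambda$ vary yields $\phi_x=0$, and running over general $x$ gives $\phi\equiv0$.

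The geometric input I would invoke is the link between $\phi$ and the VMRT from Hwang--Mok theory, \cite[Proposition~2.4]{Hw2}: writing $\widehat{\C_x}\subset W_x$ for the affine cone over $\C_x$, one has $\phi\big(\alpha,\ \widehat{T}_\alpha\widehat{\C_x}\big)=0$ for every smooth point $\alpha\in\widehat{\C_x}$, where $\widehat{T}_\alpha\widehat{\C_x}$ is the affine tangent space of the cone, of dimension $p+1$ (as $\dim\C_x=p$ by Theorem~\ref{norm}). In terms of $\psi$ this says that the tautological $1$-form $\theta$ on $\widehat{\C_x}$ defined by $\theta_\alpha(v)=\psi(\alpha,v)$, $v\in\widehat{T}_\alpha\widehat{\C_x}$, vanishes identically. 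I would then differentiate: since $\theta$ is the pullback of $\iota_E\psi$ along $\widehat{\C_x}\hookrightarrow W_x$, where $E$ is the Euler vector field, and $\mathcal L_E\psi=2\psi$ gives $d(\iota_E\psi)=2\psi$, we obtain $d\theta=2\,\psi|_{\widehat{\C_x}}$. Hence $\theta\equiv0$ forces $\psi\big(\widehat{T}_\alpha\widehat{\C_x},\ \widehat{T}_\alpha\widehat{\C_x}\big)=0$, i.e.\ every affine tangent space $\widehat{T}_\alpha\widehat{\C_x}$ is $\psi$-isotropic of dimension $p+1$.

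Now the hypothesis $2(p+1)>\dim W_x$ is precisely what excludes a nonzero such $\psi$. First, $2(p+1)>\dim W_x$ forces $\widehat{T}_\alpha\widehat{\C_x}\cap\widehat{T}_\beta\widehat{\C_x}\neq 0$ for all $\alpha,\beta$, and for $\alpha,\beta$ in general position their sum is all of $W_x$; any $\gamma$ in the intersection then satisfies $\psi(\gamma,\,\widehat{T}_\alpha\widehat{\C_x}+\widehat{T}_\beta\widehat{\C_x})=0$ by isotropy, so $\gamma\in\ker\psi$. Second, $2(p+1)>\dim W_x$ also gives $2p\ge\dim\PP(W_x)$, so the tangent variety of $\C_x$ fills $\PP(W_x)$ and a general point of each $\widehat{T}_\alpha\widehat{\C_x}$ lies on a second tangent space; letting $\beta$ vary I would deduce $\widehat{T}_\alpha\widehat{\C_x}\subset\ker\psi$ for every $\alpha$, whence $W_x=\mathrm{span}\,\widehat{\C_x}\subset\ker\psi$ and $\psi=0$. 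This yields $\phi\equiv0$, and hence the integrability of $W$.

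I expect the main obstacle to be twofold. The vanishing $\phi(\alpha,\widehat{T}_\alpha\widehat{\C_x})=0$ is the genuinely deep ingredient, resting on the deformation theory of minimal rational curves (the tangent directions to $\C_x$ being realized by deformations that keep the curve tangent to $W$); I would cite \cite[Proposition~2.4]{Hw2} rather than reprove it. The delicate step in my own argument is the last one: it requires that the tangent variety of $\C_x$ genuinely fill $\PP(W_x)$ and that the tangent spaces through a general point sweep out $\widehat{T}_\alpha\widehat{\C_x}$. This is exactly where the smoothness and irreducibility of $\C_x$ (ruling out tangential defectivity) and the \emph{strict} inequality $2(p+1)>\dim W_x$ are indispensable, and it is the point I would need to justify with the most care.
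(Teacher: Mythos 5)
Your reduction to showing $\psi=\lambda\circ\phi=0$, your appeal to \cite[Proposition~2.4]{Hw2} for the vanishing $\phi(\alpha,\widehat{T}_\alpha\widehat{\C_x})=0$, and your Euler-field computation $d(\iota_E\psi)=2\psi$ (so that $\theta\equiv 0$ forces every affine tangent space $\widehat{T}_\alpha\widehat{\C_x}$ to be $\psi$-isotropic) are all correct; this matches the first half of the argument behind the result, which the paper itself does not prove but quotes from \cite[Propositions~2.4 and 2.6]{Hw2}. The gap is in your last paragraph. By Terracini's lemma, your claim that $\widehat{T}_\alpha\widehat{\C_x}+\widehat{T}_\beta\widehat{\C_x}=W_x$ for general $\alpha,\beta$ is equivalent to $S\C_x=\PP(W_x)$, and your claim that the tangent variety of $\C_x$ fills $\PP(W_x)$ is likewise a non-defectivity assertion; neither follows from smoothness, irreducibility and $2(p+1)>\dim W_x$. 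A counterexample lying squarely inside the scope of the proposition: $X=\mathrm{Gr}(3,6)$, $n=9$, whose variety of minimal rational tangents is the Segre variety $\C_x=\PP^2\times\PP^2\subset\PP^8=\PP(T_xX)$, so that $p=4$ and $\dim W_x=9<10=2(p+1)$. Here $\C_x$ is smooth, irreducible and nondegenerate, yet $S\C_x=T\C_x$ is the cubic hypersurface of matrices of rank at most $2$, of dimension $7$: two general affine tangent spaces span only an $8$-dimensional subspace of $W_x\cong\CC^9$, and the tangent variety does not fill $\PP^8$. Your argument therefore collapses for this $X$ (for which the conclusion is trivially true, since $W=T_X$), because it relies on intermediate claims that are false there; so it is not a valid proof of the proposition.

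Moreover, the repair you anticipate in your closing paragraph cannot exist: smoothness does not rule out secant or tangential defectivity, since $\PP^2\times\PP^2\subset\PP^8$ is one of Zak's Severi varieties, all of which are smooth and defective \cite{Za}. The step you are missing---deducing $\psi=0$ from the isotropy of all the $\widehat{T}_\alpha\widehat{\C_x}$ together with $2(p+1)>\dim W_x$---is precisely the content of \cite[Proposition~2.6]{Hw2}, and it must use the smoothness of $\C_x$ in an essential way, not merely to exclude defectivity. Indeed, without smoothness the statement itself fails: a twisted cubic $\bar Z\subset\PP^3$ is Legendrian for a suitable symplectic form $\bar\psi$ on $\CC^4$, and the cone $Z\subset\PP(\CC^k\oplus\CC^4)$ over $\bar Z$ with vertex $\PP(\CC^k)$, $k\geq 1$, is irreducible, nondegenerate, of dimension $p=k+1$ with $k+4\leq 2p+1$, and all of its affine tangent spaces at smooth points are isotropic for the (nonzero) pullback of $\bar\psi$. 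Any correct argument must therefore exploit smoothness along the lines of the cone-type reasoning the paper uses elsewhere (cf.\ Lemma~\ref{arau}), for instance after passing to the symplectic quotient $W_x/\ker\psi$; your outline never engages with this.
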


\subsection{Secant variety}

\begin{defi}{\rm For varieties $Z_1, Z_2 \subset \PP^N$, we define the {\it join} $S(Z_1,Z_2) \subset \PP^N$ by the closure of the union of lines connecting two distinct points $x_1 \in Z_1$ and $x_2 \in Z_2$. In the special case that $Z=Z_1=Z_2$, $SZ:=S(Z,Z)$ is called the {\it secant variety} of $Z$. }
\end{defi}

\begin{pro}[{\cite[Corollary 2.3.7]{Ru}}, {\cite{Se}}]\label{russo} Let $Z \subset \PP^N$ be an irreducible nondegenerate variety of dimension $n \geq 2$. Assume that $\dim SZ = n+2< N$. Then $Z$ is projectively equivalent to one of the following:
\begin{enumerate}
\item $Z \subset \PP^N$ is a cone over a curve, or
\item $Z \subset \PP^{n+3}$ is a cone over the Veronese surface $v_2(\PP^2) \subset \PP^5$ (When $n=2$, then $Z=v_2(\PP^2) \subset \PP^5$).
\end{enumerate}
\end{pro}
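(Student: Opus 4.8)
The plan is to convert the secant-dimension hypothesis into an infinitesimal statement and then to reduce to the case where $Z$ is not a cone. First I would apply Terracini's lemma: for general $x,y \in Z$ the embedded projective tangent space to $SZ$ at a general point of the secant line $\overline{xy}$ equals $\langle T_xZ, T_yZ\rangle$, so that $\dim SZ = 2n - \dim(T_xZ \cap T_yZ)$. Thus the hypothesis $\dim SZ = n+2$ is equivalent to saying that two general embedded tangent $\PP^n$'s of $Z$ meet along a $\PP^{n-2}$. Next I would dispose of cones by induction on $n = \dim Z$. If $Z$ is a cone with vertex a point $v$ over a nondegenerate base $Z' \subset \PP^{N-1}$, then $SZ$ is the cone with vertex $v$ over $SZ'$, so $\dim SZ = \dim SZ' + 1$ and $\dim Z = \dim Z' + 1$; hence $\dim SZ' = \dim Z' + 2$, and provided $\dim Z' \geq 2$ the inductive hypothesis applies to $Z'$. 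Re-coning over $v$ keeps $Z$ within the two listed types, so the theorem reduces to the base case $n = 2$ (Severi's theorem) together with the task of showing that no non-cone occurs when $n > 2$.

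For the non-cone case I would study the general entry locus $\Sigma_p = \overline{\{x \in Z : p \in \overline{xy}\ \text{for some}\ y \in Z\}}$ of a general $p \in SZ$. A fiber-dimension count on the abstract secant variety gives $\dim \Sigma_p = 2n+1 - \dim SZ = n-1$; equivalently, the tangential projection $\pi_x : Z \dashrightarrow \PP^{N-n-1}$ from a general $T_xZ$ has image a curve, with the $(n-1)$-dimensional entry loci through $x$ as its general fibers. The central structural input is Scorza's lemma: since $Z$ is nondegenerate, not a cone, and $SZ \neq \PP^N$, the general entry locus is a smooth quadric hypersurface spanning a $\PP^n$, so $Z$ is a quadratic entry locus variety with secant defect $\delta = 2n+1-\dim SZ = n-1$. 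I would establish this through the second fundamental form $|II_x|$ at a general point, viewed as a linear system of quadrics on $\PP(T_xZ) = \PP^{n-1}$: the large tangential contact forced by Terracini, together with the non-cone hypothesis which prevents $|II_x|$ from degenerating, shows that $|II_x|$ is base-point free and cuts out precisely the quadric $\Sigma_p$.

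Finally I would exploit that the defect $\delta = n-1$ is the largest value possible for a variety that is not a cone. In this extremal situation the system $|II_x|$ and the one-parameter family of $(n-1)$-quadrics sweeping out $Z$ are so rigid that a prolongation argument forces $n = 2$ and identifies $|II_x|$ with the complete system of conics on $\PP(T_xZ) = \PP^1$; integrating this recovers $Z \cong v_2(\PP^2) \subset \PP^5$. Combined with the cone reduction, this yields exactly the two cases in the statement. The hard part is Scorza's lemma, namely proving that the general entry locus is genuinely a smooth quadric and that the second fundamental form acquires no base points: this is where the non-cone hypothesis must be used in an essential, non-numerical way, and it is the technical core of both Severi's classical argument for surfaces and Russo's higher-dimensional extension, after which the elimination of $n > 2$ follows as a rigidity consequence of the maximal defect.
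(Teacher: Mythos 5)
You should first be aware that the paper contains no proof of Proposition~\ref{russo}: it is imported as a known classical result from \cite[Corollary~2.3.7]{Ru} and \cite{Se}, so there is no internal argument to compare against, and your attempt has to be measured against the proof in the cited sources. Measured that way, your outline reproduces the standard architecture: Terracini's lemma (your computation that $\dim SZ=n+2$ means two general embedded tangent spaces meet in a $\PP^{n-2}$ is correct), the fiber-dimension count $\dim\Sigma_p=2n+1-\dim SZ=n-1$ for the general entry locus, equivalently that the general tangential projection has a curve as image, and the reduction to the non-cone case by induction on a vertex point (if $Z=S(v,Z')$ then $SZ=S(v,SZ')$, so $\dim SZ'=\dim Z'+2<N-1$ and the inductive hypothesis applies to $Z'$, while re-coning preserves the two listed types). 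These executed steps are correct; the cone reduction in particular is essentially complete.

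The genuine gap is that the entire classification content is delegated to two statements you name but do not prove, as you yourself concede. First, Scorza's lemma --- that for $Z$ not a cone with $\delta=n-1$ the general entry locus is a quadric hypersurface spanning a $\PP^n$ --- is the technical heart: one must show that $\Sigma_p$ spans exactly a $\PP^n$ (the non-cone hypothesis enters here, to exclude linear entry loci) and that the projection from a general $p\in SZ$ maps $\Sigma_p$ generically two-to-one onto a linear space, which is what forces degree $2$. Your proposed substitute via the second fundamental form is only gestured at: the assertion that Terracini ``forces'' $|II_x|$ to be base-point free is not an argument, and smoothness of the entry locus is neither automatic nor needed. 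Second, the concluding step --- that a non-cone with this quadric structure can only occur for $n=2$, where it must be $v_2(\PP^2)$ --- is dismissed with ``a prolongation argument forces $n=2$,'' with no mechanism given; in the actual proofs this is real work (for $n\geq 3$ one must show that the family of entry-locus quadrics through a general point is incompatible with $Z$ not being a cone, and for $n=2$ one must run Severi's argument identifying $Z$ with the $2$-uple embedding via its two-dimensional family of conics). So what you have is a faithful road map of the proof in \cite{Ru}, with its two essential ingredients left as black boxes; it is not yet a proof.
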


\begin{lem}[{\cite[Lemma~4.3]{A1}}]\label{arau} Let $Z \subset \PP^N$ be an irreducible cone whose normalization is smooth. Then $Z \subset \PP^N$ is a linear space.
\end{lem}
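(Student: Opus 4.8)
The plan is to realize the normalization $\nu\colon \tilde Z \to Z$ explicitly as a blow-down of a $\PP^1$-bundle and then to apply a smooth-contraction criterion. After replacing $\PP^N$ by the linear span of $Z$, I may assume $Z$ is nondegenerate; I fix a vertex point $v$ of the cone, set $n=\dim Z$, and choose a hyperplane $\PP^{N-1}$ missing $v$, so that $Z=C_v(B)$ is the cone over its base $B=Z\cap \PP^{N-1}$, an irreducible nondegenerate variety with $\dim B=n-1$. Away from $v$ the cone is the total space of a line bundle over $B$ with the zero section removed, so its normalization is smooth precisely when the normalization $\beta\colon \tilde B\to B$ of the base is smooth; since $\nu$ restricts to the normalization of $Z\setminus\{v\}$ and $\tilde Z$ is smooth by hypothesis, $\tilde B$ is smooth. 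This is the point where the hypothesis on $\tilde Z$ first enters.

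Next I would form the smooth $\PP^1$-bundle $W:=\PP(\O_{\tilde B}\oplus \tilde L)\to \tilde B$, where $\tilde L:=\beta^*\O_B(1)$. It carries a distinguished section $E_0\cong \tilde B$ (the one destined to be contracted to the vertex) and a complementary section, and there is a natural morphism $g\colon W\to Z$ which contracts $E_0$ to $v$ and restricts to the normalization of $Z\setminus\{v\}$ on $W\setminus E_0$. Because $W$ is smooth, hence normal, with function field $k(Z)$, the universal property of normalization factors $g$ as $W\xrightarrow{h}\tilde Z\xrightarrow{\nu}Z$. Since $\nu$ is finite, $\nu^{-1}(v)$ is a finite set, and as $E_0$ is irreducible the morphism $h$ must contract the prime divisor $E_0$ to a single smooth point $\tilde v\in\tilde Z$, being an isomorphism elsewhere.

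The decisive step is to apply the Nakano--Fujiki contraction criterion to the birational morphism $h\colon W\to \tilde Z$ of smooth varieties contracting the prime divisor $E_0$ to the smooth point $\tilde v$: this should force $E_0\cong \PP^{n-1}$ and $N_{E_0/W}\cong \O_{\PP^{n-1}}(-1)$, i.e. $h$ is the blow-up of $\tilde v$. I expect this to be the main obstacle, both in citing the criterion in exactly the form needed (a single contracted prime divisor over a smooth point, in the necessity direction) and in keeping track of conventions. Translating back through $E_0\cong \tilde B$, the projective-bundle structure computes $N_{E_0/W}$ as $\tilde L^{-1}$ (up to the usual duality convention), so $\tilde L\cong \O_{\PP^{n-1}}(1)$.

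Finally I would read off linearity. The morphism $\tilde B=\PP^{n-1}\to B\subset \PP^{N-1}$ is then given by a base-point-free linear subsystem of the complete system $|\O_{\PP^{n-1}}(1)|$; but on $\PP^{n-1}$ the only base-point-free subsystem of $|\O(1)|$ is the complete one, so the map is the standard linear embedding. Hence $N=n$ and $B\subset \PP^{N-1}$ is a linear $\PP^{n-1}$, whence $Z=C_v(B)=\PP^n=\PP^N$ is a linear space, as required.
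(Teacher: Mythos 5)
Your proposal is correct in outline, but it takes a genuinely different route from the one this paper relies on, and its decisive step is not yet justified as written. First, the comparison: the paper gives no proof at all here --- it quotes the statement from \cite[Lemma~4.3]{A1}, and Araujo's own argument is a rational-curves one: the ruling of the cone pulls back under the finite birational normalization $\nu\colon \tilde Z \to Z$ to a covering family of rational curves of $\nu^*\O_{\PP^N}(1)$-degree $1$ on the smooth variety $\tilde Z$, all members passing through one of the finitely many preimages of the vertex; the characterization of projective space of Cho--Miyaoka--Shepherd-Barron and Kebekus (\cite{CMSB}, \cite{Ke1}, quoted in this paper as Theorem~4.3) then gives $\tilde Z \cong \PP^n$ with $\nu^*\O(1)\cong\O_{\PP^n}(1)$, and linearity follows exactly as in your final paragraph. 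Your route instead goes through the birational geometry of the cone resolution $W=\PP(\O_{\tilde B}\oplus\tilde L)$, the factorization $g=\nu\circ h$, and the recognition of $h$ as a blow-up. Your reductions are sound (one inconsequential slip: $Z\setminus\{v\}$ is the total space of a line bundle over $B$ \emph{including} its zero section --- it is the punctured \emph{affine} cone that is a line bundle minus the zero section; either description yields that $\tilde B$ is smooth).

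The gap is exactly the step you flagged: Fujiki--Nakano is the wrong direction. Their theorem \emph{assumes} $E_0\cong\PP^{n-1}$ with normal bundle $\O_{\PP^{n-1}}(-1)$ and \emph{constructs} a smooth blow-down; you need the converse, namely that a proper birational morphism of smooth varieties which contracts an irreducible divisor to a point and is an isomorphism elsewhere is the blow-up of that point. This converse is true, but it is not the cited theorem, and it genuinely requires smoothness of $W$, not just normality: by Kawakita's classification, weighted blow-ups with weights $(1,a,b)$ give divisorial contractions from terminal $3$-folds to a smooth point, with irreducible exceptional divisor, which are not ordinary blow-ups. For smooth $W$ one can argue as follows: write $K_W=h^*K_{\tilde Z}+aE_0$, let $\sigma\colon \mathrm{Bl}_{\tilde v}\tilde Z\to\tilde Z$ be the blow-up with exceptional divisor $F$, take a common resolution $p\colon V\to W$, $q\colon V\to \mathrm{Bl}_{\tilde v}\tilde Z$, and compare coefficients in $a\,p^*E_0+\sum a_iF_i=(n-1)\,q^*F+\sum b_jF_j$. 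If the divisorial valuations of $E_0$ and $F$ differed, the coefficient of the strict transform of $E_0$ would force $a\geq n$ while that of the strict transform of $F$ would force $n-1\geq a+1$ (both inequalities use that $E_0$, resp.\ $F$, is Cartier on the smooth $W$, resp.\ $\mathrm{Bl}_{\tilde v}\tilde Z$, and that discrepancies over smooth varieties are $\geq 1$), a contradiction; hence $v_{E_0}=\mathrm{ord}_{\tilde v}$, and since $\O_W(-E_0)$ is $h$-ample (its restriction to $E_0$ is $\beta^*\O_B(1)$, ample because $\beta$ is finite), $W\cong \mathrm{Proj}\bigoplus_k h_*\O_W(-kE_0)=\mathrm{Proj}\bigoplus_k \m_{\tilde v}^k=\mathrm{Bl}_{\tilde v}\tilde Z$ over $\tilde Z$. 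With that step supplied (or with a correct citation for it), the rest of your proof --- $E_0\cong\PP^{n-1}$, $\tilde L\cong\O(1)$, and the base-point-free subsystem argument --- goes through verbatim; alternatively, one can bypass it entirely by Araujo's shortcut through \cite{CMSB}, \cite{Ke1}.
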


\subsection{Varieties of minimal rational tangents admitting low dimensional secant varieties}

\begin{pro}\label{p+1} Let $X$ and $\K$ be as in the Notation-Assumptions~\ref{NA}. Denote by $\C_x$ the variety of minimal rational tangents at a general point $x \in X$. 
\begin{enumerate}
\item If $\dim S\C_x=p$ for general $x \in X$, then $X=\PP^n$.  
\item If $\dim S\C_x=p+1$ for general $x \in X$, then $X=Q^n$.
\end{enumerate}
\end{pro}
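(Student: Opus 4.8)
The plan is to translate the hypotheses on $S\C_x$ into information about the dimension and linear span of $\C_x$, and then feed the resulting geometry into the rigidity results of the previous subsections. Throughout I would use that $\C_x$ is irreducible (it is the image of the irreducible $\widetilde{\K_x}$), that it is nondegenerate in its linear span $\PP(W_x)$ by the very definition of $W_x$, that $\dim \C_x = p$ by Theorem~\ref{norm}, and that its normalization $\widetilde{\K_x}$ is smooth. Since $\C_x \subseteq S\C_x \subseteq \PP(W_x)$, one always has $p \le \dim S\C_x \le \dim \PP(W_x)$, and a linear $\C_x$ would satisfy $\dim S\C_x = \dim \C_x = p$.

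For part (i), I would first observe that $\dim S\C_x = p = \dim \C_x$ forces $S\C_x = \C_x$, since $S\C_x$ is irreducible and contains the irreducible $\C_x$ of the same dimension. A variety equal to its own secant variety contains the line through any two of its points and is therefore a linear subspace, so $\C_x$ is linear. By Proposition~\ref{lin} the only linear $\C_x$ is $\C_x = \PP(T_xX)$, that is $p = n-1$, and Theorem~\ref{CMSB} then gives $X \cong \PP^n$.

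For part (ii), the goal is to prove that $\C_x \subset \PP(T_xX)$ is a hypersurface, i.e. $p = n-2$, after which Corollary~\ref{Mi2} yields $X \cong Q^n$. The key step is to show that $S\C_x$ fills up the linear span, equivalently $\dim \PP(W_x) = p+1$, so that $\C_x$ is a nondegenerate (hence nonlinear) hypersurface in $\PP(W_x)$. I would argue by contradiction: if $S\C_x \subsetneq \PP(W_x)$, then $\C_x$ is a nondegenerate variety whose secant variety has dimension exactly one more than the variety and does not fill the ambient space, so by the classification of such varieties (the boundary case $\dim SZ = \dim Z + 1$ of Proposition~\ref{russo}) $\C_x$ must be a cone. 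But $\C_x$ has smooth normalization $\widetilde{\K_x}$, so Lemma~\ref{arau} forces $\C_x$ to be a linear space, which contradicts $\dim S\C_x = p+1 > p = \dim \C_x$. Hence $S\C_x = \PP(W_x)$ and $\dim W_x = p+2$, so $\C_x$ is an irreducible hypersurface in $\PP(W_x)$. Proposition~\ref{ara} then shows that $W$ is integrable, Proposition~\ref{hwlem2} gives $W_x = T_xX$, whence $n = \dim W_x = p+2$; finally Corollary~\ref{Mi2} identifies $X$ with $Q^n$.

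The main obstacle is the classification step used to exclude the case $S\C_x \subsetneq \PP(W_x)$: one needs the classical fact that a nondegenerate irreducible nonlinear variety $Z$ with $\dim SZ = \dim Z + 1$ that does not span its secant variety is a cone. If a clean citation for this exact statement is awkward, the alternative is to establish it in place via Terracini's lemma, since $\dim SZ = \dim Z + 1$ forces the embedded tangent spaces at two general points of $\C_x$ to meet in codimension one, and this tangential degeneracy, combined with the smoothness of $\widetilde{\K_x}$, should pin down the cone/hypersurface dichotomy and thereby let Lemma~\ref{arau} finish the argument. The remaining steps are routine bookkeeping with the propositions already recorded.
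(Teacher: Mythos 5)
Your part (i) is exactly the paper's argument: $\dim S\C_x=p$ forces $\C_x=S\C_x$, hence $\C_x$ is linear, and Proposition~\ref{lin} together with Theorem~\ref{CMSB} gives $X=\PP^n$. Your endgame for part (ii) is also the paper's: once $\C_x\subset\PP(W_x)$ is known to be an irreducible hypersurface, Proposition~\ref{ara}, Proposition~\ref{hwlem2} and Corollary~\ref{Mi2} yield $p=n-2$ and $X=Q^n$.

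The gap sits in the one step that is the real content of part (ii): your claim that $S\C_x\subsetneq\PP(W_x)$ would force $\C_x$ to be a cone. You attribute this to ``the boundary case $\dim SZ=\dim Z+1$ of Proposition~\ref{russo}'', but Proposition~\ref{russo} is stated only for $\dim SZ=\dim Z+2<N$; nothing in the paper covers secant defect one, and your fallback via Terracini's lemma is left as an unexecuted sketch (``should pin down the cone/hypersurface dichotomy''). The fact you need is true and classical --- for an irreducible nondegenerate $Z$, $SZ\neq\PP^N$ in fact forces $\dim SZ\geq\dim Z+2$, so your hypothetical case is empty --- but the standard proof of that fact is precisely the join computation the paper performs directly on $S\C_x$: for any $z\in S\C_x\setminus\C_x$ one has $S(z,\C_x)=S\C_x$ by dimension and irreducibility, hence $S(z,S\C_x)=S(z,S(z,\C_x))=S(z,\C_x)=S\C_x$, so every point of the dense subset $S\C_x\setminus\C_x$ is a vertex of $S\C_x$; since the vertex locus of an irreducible projective variety is a closed linear subspace, $S\C_x$ is a $(p+1)$-dimensional linear space and $\C_x$ is a hypersurface in $\PP(W_x)=S\C_x$. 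This direct argument needs no case distinction, no appeal to Lemma~\ref{arau}, and no smoothness of $\widetilde{\K_x}$. So your architecture is sound and would close once the classification step is supplied, but as written the pivotal step rests on a miscitation, and filling it honestly means either importing an external classical reference or reproducing the paper's own join argument.
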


\begin{proof} $\rm (i)$ Assume $\dim S\C_x=p$ for general $x \in X$. Then $\C_x=S\C_x \subset \PP(T_xX)$ is linear. Proposition~\ref{lin} implies that $\C_x=\PP(T_xX)$. Hence we have $p=n-1$. By Theorem~\ref{CMSB}, we see $X$ is isomorphic to $\PP^n$.\\ 
$\rm (ii)$ Assume $\dim S\C_x=p+1$ for general $x \in X$. Then, for $z \in S\C_x \setminus \C_x$, $S(z,\C_x)$ coincides with $S\C_x$. So we see $S(z,S\C_x)=S(z,S(z,\C_x))=S(z,\C_x)=S\C_x$. This implies that $S\C_x$ is a $(p+1)$-dimensional linear subspace. Thus we see $\C_x \subset \PP(W_x)=S\C_x$ is an irreducible hypersurface for general $x \in X$. From Proposition~\ref{ara}, $W \subset T_X$ is integrable. However Proposition~\ref{hwlem2} implies that $W_x$ coincides with $T_xX$ for general $x \in X$. Therefore we have $p=n-2$. It follows that $X$ is isomorphic to $Q^n$ from Corollary~\ref{Mi2}.
\end{proof}

\begin{pro}\label{p+2} Under the same assumption as in Proposition~\ref{p+1}, if $\dim S\C_x=p+2$ for general $x \in X$, then one of the following holds: 
\begin{enumerate}
\item $p=1$, 
\item $p=n-3$, 
\item $X$ is the Lagrangian Grassmann $LG(3,6)$, 
\item $\C_x$ is the Veronese surface in its linear span $\PP(W_x)=\PP^5$ and  $n>6$, or 
\item $\C_x$ is a degenerate singular variety satisfying $S\C_x=\PP^{p+2}$.
\end{enumerate}
\end{pro}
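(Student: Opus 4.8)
The plan is to analyze $\C_x$ inside its linear span $\PP(W_x)=\PP^N$, where $N=\dim W_x-1$. There $\C_x$ is irreducible and nondegenerate of dimension $p$, and by Theorem~\ref{norm} its normalization is the smooth variety $\widetilde{\K_x}$, so $\C_x$ has smooth normalization. If $p=1$ we are immediately in case (i), so I would assume $p\geq 2$. The governing dichotomy is whether the secant variety fills its span, i.e. whether $\dim S\C_x=p+2<N$ or $S\C_x=\PP(W_x)=\PP^{p+2}$.

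First I would treat the case $\dim S\C_x=p+2<N$, where Proposition~\ref{russo} applies to the nondegenerate $\C_x\subset\PP^N$ and yields two alternatives: a cone over a curve, or a cone over $v_2(\PP^2)$ (degenerating to $\C_x=v_2(\PP^2)\subset\PP^5$ when $p=2$). I would eliminate every genuine cone: since $\C_x$ has smooth normalization, Lemma~\ref{arau} forces such a cone to be a linear space, but a linear $\C_x$ would satisfy $\dim S\C_x=p$, contradicting $\dim S\C_x=p+2$ (and Proposition~\ref{lin} independently forbids a proper linear $\C_x$). Hence the only surviving possibility is $\C_x=v_2(\PP^2)\subset\PP^5$, so $p=2$ and $\dim W_x=6$. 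Splitting on whether $W_x=T_xX$: if so, then $n=6$ and Corollary~\ref{LG} gives $X=LG(3,6)$, which is case (iii); if $W_x\subsetneq T_xX$, then $n>6$ and $\C_x$ is the Veronese surface in $\PP(W_x)=\PP^5$, which is case (iv).

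Next I would treat the complementary case $S\C_x=\PP(W_x)=\PP^{p+2}$, so $\dim W_x=p+3$, again splitting on whether $W_x=T_xX$. If $W_x=T_xX$ then $n=\dim W_x=p+3$, i.e. $p=n-3$, which is case (ii). If $W_x\subsetneq T_xX$, then $\C_x$ is degenerate in $\PP(T_xX)$, and it remains only to show it is singular. Here I would argue by contradiction: were $\C_x$ smooth, then since $2(p+1)>p+3=\dim W_x$ for $p\geq 2$, Proposition~\ref{mok} would make $W\subset T_X$ integrable, whence Proposition~\ref{hwlem2} would force $W_x=T_xX$, contradicting degeneracy. Thus $\C_x$ is a degenerate singular variety with $S\C_x=\PP^{p+2}$, which is case (v).

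The two delicate points I expect to be the crux are, first, the clean elimination of the cone alternatives of Proposition~\ref{russo}, which hinges on combining the smooth-normalization property (Theorem~\ref{norm}, Lemma~\ref{arau}) with the secant-dimension bookkeeping and Proposition~\ref{lin}; and second, the smoothness dichotomy in the span-filling case, where the numerical inequality $2(p+1)>\dim W_x$ must be checked to invoke Proposition~\ref{mok}. This last inequality is precisely the input that separates the homogeneous boundary case $p=n-3$ from the genuinely singular degenerate case (v).
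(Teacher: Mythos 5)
Your proposal is correct and follows essentially the same route as the paper's own proof: the dichotomy on whether $S\C_x$ fills $\PP(W_x)$, the use of Proposition~\ref{russo} together with Theorem~\ref{norm} and Lemma~\ref{arau} to eliminate cones and isolate the Veronese surface, Corollary~\ref{LG} in the nondegenerate subcase, and the smoothness contradiction via Proposition~\ref{mok} and Proposition~\ref{hwlem2} in the span-filling subcase. The only differences are cosmetic: you make explicit the $p\geq 2$ reduction and the inequality check $2(p+1)>p+3$, which the paper leaves implicit under its standing assumption that case (i) fails.
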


\begin{proof} Suppose that $\rm (i)$, $\rm (ii)$ and $\rm (iii)$ do not hold. Then it is enough to show that either $\rm (iv)$ or $\rm (v)$ hold. 

First assume that $S\C_x$ does not coincide with $\PP(W_x)$. From Proposition~\ref{russo}, Theorem~\ref{norm} and Lemma~\ref{arau}, $\C_x$ is projectively equivalent to the Veronese surface $v_2(\PP^2)$. If $\C_x \subset \PP(T_xX)$ is nondegenerate, $X$ is isomorphic to the Lagrangian Grassmann $LG(3,6)$ by Corollary~\ref{LG}. It contradicts our assumption. Hence $\C_x$ is degenerate. 

Second assume that $S\C_x=\PP(W_x)$. Note that $W_x$ does not coincide with $T_xX$ because $p$ is not $n-3$ by our assumption. Hence $\C_x \subset \PP(T_xX)$ is degenerate. Here we have $2(p+1)> \dim W_x$. If $\C_x$ is smooth, Proposition~\ref{mok} implies that the distribution $W \subset T_X$ is integrable. Furthermore we see $W_x=T_xX$ by Proposition~\ref{hwlem2}. It is a contradiction. Thus $\C_x$ is singular.

\end{proof}

\section{Low dimensional case}

Let $X$ be a Fano $n$-fold of $\rho=1$, $\K$ a minimal rational component of $X$ satisfying the Notation-Assumptions~\ref{NA}. We use the notation introduced in Section~\ref{Dl}.

\begin{them}[{\cite[Theorem~3.12]{HK}}]\label{HKS} $d_2 \geq \dim S\C_x+1$.
\end{them}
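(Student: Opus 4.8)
The plan is to relate the second dimension $d_2 = \dim V_x^2$ to the geometry of the secant variety $S\C_x$, using the fact that $V_x^2$ is obtained from $x$ by taking all $\K$-curves through $x$, and then all $\K$-curves meeting that locus. First I would set up the infinitesimal picture at the distinguished point $x$: the tangent directions of $\K$-curves through $x$ are parametrized by $\C_x \subset \PP(T_xX)$, so the tangent space to $V_x^1$ at $x$ is the affine cone over $\C_x$, whence $d_1 = p+1 = \dim \C_x + 1$ (consistent with Proposition~\ref{in}). The key geometric idea is that moving a second curve off $V_x^1$ produces, to first order, tangent directions that are sums of a direction in $\C_x$ and a direction coming from a nearby curve, and these span (a cone over) the secant variety $S\C_x$.

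The main steps, in order, are as follows. I would work with the universal family maps $\pi_0, \iota_0$ and analyze the differential of the map whose image defines $V_x^2$. Concretely, a point of $V_x^2$ arises from a point $y \in V_x^1$ near $x$ together with a $\K$-curve through $y$; the limiting tangent directions at $x$ as $y \to x$ should be governed by chords of $\C_x$, so that the projectivized tangent cone of $V_x^2$ at $x$ contains $S\C_x$. Since $V_x^2$ is irreducible (Lemma~\ref{dim}(i)) and its tangent cone at the smooth-locus point $x$ has dimension at most $\dim V_x^2 = d_2$, I would conclude $\dim S\C_x \le d_2 - 1$, i.e. $d_2 \ge \dim S\C_x + 1$. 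The normalization statement of Theorem~\ref{norm} and the regularity of the tangent map $\tau_x$ from Theorem~\ref{ke} ensure that the tangent directions of the varying curves are controlled by $\C_x$ itself and not by some degenerate limit.

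The hard part will be making precise the claim that \emph{secant} directions of $\C_x$ (not merely tangent directions) actually appear in the tangent cone of $V_x^2$ at $x$. One must show that as a curve based at a nearby point $y \in V_x^1 \setminus \{x\}$ degenerates toward $x$, its tangent direction at $x$ sweeps out chords of $\C_x$ rather than collapsing onto $\C_x$ itself. I expect the cleanest route is to use freeness (every $\K^0$-curve is free, so $\iota_0$ is smooth) to guarantee that the relevant incidence variety has the expected dimension and that the chord construction is genuinely realized in $V_x^2$; freeness also lets me compute tangent spaces without pathology. Since this is precisely the statement quoted from \cite[Theorem~3.12]{HK}, I would lean on the VMRT deformation theory developed there, where the passage from $\C_x$ to its secant variety via second-order curve data is carried out in detail.
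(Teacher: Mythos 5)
The paper offers no proof of this statement: it is imported verbatim from \cite[Theorem~3.12]{HK}, so your ultimate deferral to that reference is exactly the paper's own treatment. Your supporting sketch --- that the projectivized tangent cone of $V_x^2$ at $x$ contains $S\C_x$, so that $d_2$, being the dimension of that tangent cone, is at least $\dim S\C_x+1$ --- is sound in outline and reflects the geometry behind the Hwang--Kebekus argument; note only that $x$ is in general a \emph{singular} point of $V_x^2$ (so ``smooth-locus point'' is a misnomer), which is harmless because the tangent cone at any point of a variety has dimension equal to the local dimension of the variety there.
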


\begin{lem}\label{p+3} If $l_{\K}=\lfloor \frac{n-p}{2}\rfloor +1$, we have $\dim S\C_x \leq p+3$. Moreover if $\dim S\C_x=p+3$, then $n$ and $p+1$ are congruent modulo $2$.
\end{lem}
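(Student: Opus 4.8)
The plan is to reduce the entire statement to the single inequality $\dim S\C_x \le d_2 - 1$ furnished by Theorem~\ref{HKS}, and then to pin down $d_2$ under the hypothesis that the upper bound for $l_{\K}$ is attained. Write $d_0 = 0$ and let $e_m := d_m - d_{m-1}$ denote the successive jumps. Proposition~\ref{in} gives $e_1 = d_1 = p+1$, while Lemma~\ref{dim}~$\mathrm{(ii)}$ and Proposition~\ref{key} together force $e_m \ge 2$ for every $2 \le m \le l_{\K}-1$ (there is strict growth as long as $d_m < n$, and a jump of exactly $1$ would already force $d_m = n$), and $e_{l_{\K}} \ge 1$. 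Thus proving $\dim S\C_x \le p+3$ amounts to proving $d_2 = (p+1) + e_2 \le p+4$, i.e. $e_2 \le 3$.

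For the main estimate I would run an ``excess'' bookkeeping on these jumps. Summing them gives $\sum_{m=2}^{l_{\K}} e_m = n - (p+1)$, and the minimal admissible value of this sum, obtained from $e_2 = \cdots = e_{l_{\K}-1} = 2$ together with $e_{l_{\K}} = 1$, equals $2(l_{\K}-2)+1 = 2l_{\K}-3$. Hence the total slack $\sum_{m=2}^{l_{\K}} e_m - (2l_{\K}-3) = n - p + 2 - 2l_{\K}$ is exactly $0$ when $n-p$ is even and exactly $1$ when $n-p$ is odd, precisely because the hypothesis $l_{\K} = \lfloor \frac{n-p}{2}\rfloor + 1$ is in force. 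Since every jump equals its floor value plus a nonnegative share of this slack, and the total slack is at most $1$, we obtain $e_2 \le 3$, whence $d_2 \le p+4$ and $\dim S\C_x \le p+3$.

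For the parity refinement, observe that $e_2 = 3$ (equivalently $d_2 = p+4$) can occur only when the single available unit of slack is spent on the very first jump, which is possible only in the odd case, where the slack equals $1$. Therefore $\dim S\C_x = p+3$ forces $d_2 \ge p+4$, hence $d_2 = p+4$ and $e_2 = 3$, so $n-p$ is odd, i.e. $n \equiv p+1 \pmod 2$. The boundary cases with $l_{\K} = 2$ (where $n-p \in \{2,3\}$ and $d_2 = n$) are absorbed by the same count but can also be checked by hand, since there $\dim S\C_x \le n-1 \le p+2$ sits comfortably inside the bound. The only genuinely delicate point is the justification that each intermediate jump is at least $2$: this is exactly where Proposition~\ref{key} and the irreducibility and equidimensionality recorded in Lemma~\ref{dim} are indispensable, and once that is secured the remaining counting is routine.
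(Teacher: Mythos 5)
Your proposal is correct and follows essentially the same route as the paper: your ``slack'' bookkeeping on the jumps $e_m$ is just a reformulation of the paper's inequality $d_2 + 2(l_{\K}-3)+1 \leq d_{l_{\K}} = n$ (intermediate jumps are at least $2$ by Proposition~\ref{key}, the last at least $1$), after which both arguments bound $d_2$ by $p+3$ or $p+4$ according to the parity of $n-p$ and conclude via Theorem~\ref{HKS}.
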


\begin{proof} Suppose that $l_{\K}=\lfloor \frac{n-p}{2}\rfloor +1$ holds. According to Proposition~\ref{key}, we know $d_2 +2(l_{\K}-3) +1 \leq d_{l_{\K}}=n$.  Hence we have $d_2 \leq n-2l_{\K}+5=n-2\lfloor \frac{n-p}{2}\rfloor +3$. The right hand side is equal to $p+3$ or $p+4$. Furthermore if it is $p+4$, then $n$ and $p+1$ are congruent modulo $2$. Consequently, our assertion follows from Theorem~\ref{HKS}.
\end{proof}

From Theorem~\ref{MT} (cf. Corollary~\ref{sp}) and Remark~\ref{remark}~{\rm (ii)}, we can compute the length $l_{\K}$ in the case $n \leq 7$. In fact, we obtain the following table:

\begin{center}
\begin{tabular}{|c|c|c||c|c|c||c|c|c||c|c|c||c|c|c|}
\hline
 $n$ & $p$ & $l_{\K}$ & $n$ & $p$ & $l_{\K}$ & $n$ & $p$ & $l_{\K}$ & $n$ & $p$ & $l_{\K}$ & $n$ & $p$ & $l_{\K}$ \\ \hline \hline
 $3$&$2$&$1$& $4$&$3$&$1$& $5$&$4$&$1$& $6$&$5$&$1$& $7$&$6$&$1$  \\
 $3$&$1$&$2$& $4$&$2$&$2$& $5$&$3$&$2$& $6$&$4$&$2$& $7$&$5$&$2$      \\
 $3$&$0$&$3$& $4$&$1$&$2$& $5$&$2$&$2$& $6$&$3$&$2$& $7$&$4$&$2$ \\
 $$ & $$&$$ & $4$&$0$&$4$& $5$&$1$&$3$& $6$&$2$&$2~{\rm or}~3$& $7$&$3$&$2~{\rm or}~3$  \\
 $$ & $$& $$& $$ &$$ & $$& $5$&$0$&$5$& $6$&$1$&$3$& $7$&$2$&$3$ \\
 $$ & $$& $$& $$ &$$ & $$& $$ &$$ &$$ & $6$&$0$&$6$& $7$&$1$&$4$ \\ 
 $$ & $$& $$& $$ &$$ & $$& $$ &$$ &$$ & $$& $$&$$&   $7$&$0$&$7$ \\ 

   \hline
\end{tabular}

\end{center}

Here we assume the irreducibility of $\K_x$ if $p \geq 1$. However $l_{\K}=n$ holds without the assumption of the irreducibility of $\K_x$ if $p=0$. 
From this table, we see the length $l_{\K}$ depends only on the pair $n$ and $p$ in the case $n \leq 5$. However it does not hold when $n \geq 6$. In fact, we have the following examples.

\begin{exa}
\begin{enumerate}
\item A $6$-dimensional smooth hypersurface of degree $4$ satisfies $(n,p,l_{\K})=(6,2,2)$. 
\item The Lagrangian Grassmann $LG(3,6)$ satisfies $(n,p,l_{\K})=(6,2,3)$. 
\end{enumerate}
\end{exa}

\begin{proof} See \cite[Proposition~6.2, Corollary~6.6]{HK}.
\end{proof}

 Here we study the structure of $X$ when $(n,p,l_{\K})=(6,2,3)$ and $(7,3,3)$.

\begin{pro} When $(n,p,l_{\K})=(6,2,3)$, one of the following holds:
\begin{enumerate}
\item $X=LG(3,6)$, or 
\item $\C_x \subset \PP(T_xX)$ is a degenerate singular surface satisfying $S\C_x=\PP^{4}$.
\end{enumerate}
\end{pro}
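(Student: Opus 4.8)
The plan is to determine $\dim S\C_x$ exactly for a general point $x \in X$ and then feed the answer into Propositions~\ref{p+1} and~\ref{p+2}. First I would record that with $(n,p)=(6,2)$ we have $\lfloor \frac{n-p}{2}\rfloor + 1 = 3 = l_{\K}$, so Lemma~\ref{p+3} is applicable and yields $\dim S\C_x \leq p+3 = 5$. The parity clause of that lemma then sharpens this: if $\dim S\C_x$ were equal to $p+3=5$, the integers $n=6$ and $p+1=3$ would have to be congruent modulo $2$; since they are not, this case is excluded and we obtain $\dim S\C_x \leq p+2 = 4$.

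Next I would eliminate the two smallest possible values. Because $\C_x$ is a surface ($p=2$) and $S\C_x \supseteq \C_x$, we have $\dim S\C_x \geq 2$, so $\dim S\C_x \in \{2,3,4\}$. If $\dim S\C_x = p = 2$, then Proposition~\ref{p+1}~(i) forces $X \cong \PP^6$ and hence $p = n-1 = 5$, contradicting $p=2$. If $\dim S\C_x = p+1 = 3$, then Proposition~\ref{p+1}~(ii) forces $X \cong Q^6$ and hence $p = n-2 = 4$, again contradicting $p=2$. Therefore the only remaining possibility is $\dim S\C_x = p+2 = 4$.

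Finally I would invoke Proposition~\ref{p+2} in the regime $\dim S\C_x = p+2$ and discard the inapplicable alternatives. Its case (i) $p=1$ and case (ii) $p = n-3 = 3$ are ruled out by $p=2$, and case (iv) requires $n>6$, which fails since $n=6$. The only survivors are case (iii), namely $X \cong LG(3,6)$, and case (v), namely that $\C_x$ is a degenerate singular variety with $S\C_x = \PP^{p+2} = \PP^4$; as $p=2$ this says precisely that $\C_x$ is a degenerate singular surface with $S\C_x = \PP^4$. These are exactly conclusions (i) and (ii) of the proposition.

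Once the earlier propositions are granted, this argument is essentially bookkeeping; the one genuinely load-bearing step is the parity observation from Lemma~\ref{p+3} that excludes $\dim S\C_x = 5$. This is the main obstacle in the sense that without it we would be left with the value $\dim S\C_x = p+3$, for which there is no classification available and the clean dichotomy would break down. Everything after that point is a matter of matching numerical constraints against the list in Proposition~\ref{p+2}.
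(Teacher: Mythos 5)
Your proposal is correct and follows exactly the paper's own route: Lemma~\ref{p+3} (with its parity clause applied to $n=6$, $p+1=3$) gives $\dim S\C_x \leq 4$, and then Propositions~\ref{p+1} and~\ref{p+2} eliminate all alternatives except $X=LG(3,6)$ and the degenerate singular surface with $S\C_x=\PP^4$. The paper states this in two sentences and leaves the bookkeeping implicit; your write-up simply makes those eliminations explicit.
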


\begin{proof} From Lemma~\ref{p+3}, we have $\dim S\C_x \leq 4$. Hence our assertion is derived from Proposition~\ref{p+1} and \ref{p+2}.
\end{proof}

The same argument implies the following:

\begin{pro} When $(n,p,l_{\K})=(7,3,3)$, then $\C_x \subset \PP(T_xX)$ is a degenerate singular $3$-fold satisfying $S\C_x=\PP^{5}$.
\end{pro}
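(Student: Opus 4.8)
The plan is to mimic exactly the argument given in the preceding proposition for the case $(n,p,l_{\K})=(6,2,3)$, since the logical structure is identical. First I would invoke Lemma~\ref{p+3}: under the hypothesis $l_{\K}=\lfloor \frac{n-p}{2}\rfloor+1$ we obtain a bound on $\dim S\C_x$. Here $(n,p)=(7,3)$ gives $\lfloor \frac{n-p}{2}\rfloor+1=\lfloor 2\rfloor+1=3=l_{\K}$, so the hypothesis of Lemma~\ref{p+3} is satisfied, and the lemma yields $\dim S\C_x\leq p+3=6$. Crucially, since $n$ and $p+1$ have the same parity only when $7$ and $4$ are congruent mod $2$ — which they are not — the "moreover" clause of Lemma~\ref{p+3} forbids the extremal value $\dim S\C_x=p+3=6$. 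Hence in fact $\dim S\C_x\leq p+2=5$.

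Next I would run through the classification of Proposition~\ref{p+1} and Proposition~\ref{p+2}, ruling out each outcome in turn. Since $p=\dim\K_x>0$ and $\C_x\subset\PP(T_xX)$ is nondegenerate-or-not, the possibility $\dim S\C_x=p$ would force $X=\PP^7$ by Proposition~\ref{p+1}~(i), contradicting $p=3=n-1-3$ (as $\PP^7$ has $p=n-1=6$); similarly $\dim S\C_x=p+1$ would force $X=Q^7$ by Proposition~\ref{p+1}~(ii), which has $p=n-2=5\neq 3$. So neither holds, and we are left with $\dim S\C_x=p+2=5$, putting us precisely in the situation of Proposition~\ref{p+2} with $(n,p)=(7,3)$. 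Among the five alternatives of Proposition~\ref{p+2}, case~(i) $p=1$ fails since $p=3$; case~(ii) $p=n-3=4$ fails since $p=3$; case~(iii) $X=LG(3,6)$ fails because $LG(3,6)$ is $6$-dimensional while $n=7$; and case~(iv) is the Veronese surface $v_2(\PP^2)$, which has $\dim\C_x=2$, whereas here $\dim\C_x=p=3$, so case~(iv) is excluded too.

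Therefore only case~(v) of Proposition~\ref{p+2} survives, namely that $\C_x\subset\PP(T_xX)$ is a degenerate singular variety with $S\C_x=\PP^{p+2}=\PP^5$. Since $\dim\C_x=p=3$, it is a $3$-fold, giving exactly the asserted conclusion. I would then simply write that the claim follows, noting (as the paper does in the line "The same argument implies the following") that this is word-for-word the reasoning of the previous proposition with the numerics updated.

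The only genuinely delicate point — and the one I would double-check rather than treat as routine — is the elimination of the boundary value $\dim S\C_x=p+3$ via the parity clause of Lemma~\ref{p+3}. Everything else is bookkeeping against the invariant $p$, but the parity argument is what collapses the four-way ambiguity (values $p,p+1,p+2,p+3$) down to the single surviving value $p+2$ and thereby to case~(v) alone; without it one could not rule out other configurations of $\C_x$. Thus the main obstacle is confirming that $n=7$ and $p+1=4$ are indeed incongruent mod $2$ so that Theorem~\ref{HKS} combined with Lemma~\ref{p+3} forces $d_2\leq p+3$ with the extremal case excluded.
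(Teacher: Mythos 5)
Your proposal is correct and follows exactly the paper's intended argument: the paper proves the $(6,2,3)$ case by combining Lemma~\ref{p+3} (whose parity clause yields $\dim S\C_x \leq p+2$) with the classifications of Propositions~\ref{p+1} and~\ref{p+2}, and then states that ``the same argument'' gives the $(7,3,3)$ case, which is precisely what you carry out. Your eliminations of the alternatives (via $p=3$, $n=7$, and $\dim\C_x=3$ against the Veronese surface) are all sound, so there is nothing to add.
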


\begin{rem} \rm In general, it is believed that any variety of minimal rational tangents $\C_x$ at a general point is smooth.
\end{rem}

{\bf Acknowledgements}
The author would like to thank Professor Hajime Kaji for valuable seminars and encouragements. He is also grateful to referees, for their careful reading of the text and useful suggestions and comments. In particular, one of referees pointed out a gap of the proof of Lemma~\ref{dim}. 
The author is supported by Research Fellowships of the Japan Society for the Promotion of Science for Young Scientists.

\end{document}